\definecolor{red}{rgb}{1,0.2,0.2}
\definecolor{green}{rgb}{0.2,1,0.5}
\definecolor{blue}{rgb}{0,0,1}
\definecolor{lightblue}{rgb}{0.3,0.5,1}
\definecolor{black}{rgb}{0,0,0}
\newcommand {\N} {{\rm I\kern-2.5pt N}}
\newcommand {\R} {{\rm I\kern-2.5pt R}}
\newtheorem{lemma}{Lemma}
\newtheorem{coro}{Corollary}
\newtheorem{theorem}{Theorem}
\newtheorem{assm}{Assumption}
\newtheorem{remark}{Remark}
\newcommand{\nnb}{\nonumber}
\newcommand{\1}{\mathbf{1}}
\newcommand{\0}{\mathbf{0}}
\newcommand{\dist}{\mathsf{d}}
\newcommand{\eb}{\mathbf{e}}
\newcommand{\gb}{\mathbf{g}}
\newcommand{\qb}{\mathbf{q}}
\newcommand{\T}{^{\mathsf{T}}}
\newcommand{\ub}{\mathbf{u}}
\newcommand{\wb}{\mathbf{w}}
\newcommand{\yb}{\mathbf{y}}
\newcommand{\zb}{\mathbf{z}}
\newcommand{\beqa}{\begin{eqnarray}}
\newcommand{\eeqa}{\end{eqnarray}}
\newcommand{\beqan}{\begin{eqnarray*}}
	\newcommand{\eeqan}{\end{eqnarray*}}
\newcommand{\beq}{\begin{equation}}
\newcommand{\eeq}{\end{equation}}
\newcommand{\bfl}{\begin{flushleft}}
	\newcommand{\efl}{\end{flushleft}}
\newcommand{\myb}{\hspace{-0.1in}}
\newcommand{\myeq}{& \hspace{-0.1in} = & \hspace{-0.1in}}
\newcommand{\lb}{\nonumber \\}
\newcommand{\myarr}{\begin{array}{lll}}
	\newcommand{\cA}{{\cal A}}
	\newcommand{\bx}{{\bf x}}
	\newcommand{\by}{{\bf y}}
	\newcommand{\bv}{{\bf v}}
\newcommand{\btheta}{{\boldsymbol{\theta}}}
\newcommand{\bg}{{\bf g}}
\newcommand{\bV}{{\bf V}}
\newcommand{\myleq}{& \myb \leq & \myb}
\newcommand{\bitem}{\begin{itemize}}
	\newcommand{\eitem}{\end{itemize}}
\newcommand{\benum}{\begin{enumerate}}
	\newcommand{\eenum}{\end{enumerate}}
\newcommand{\norm}[1]{\left\| #1 \right\|}
\newcommand{\E}[1]{{\mathbbm E}\left[ #1 \right]}
\newcommand{\Es}[2]{{\mathbbm E}_{#1}\left[ #2 \right]}
\title{\LARGE \bf
Distributed Optimization with Global Constraints
Using Noisy Measurements
}
\author{Van Sy Mai, \ \and \ Richard J. La, \ \and \ 
    Tao Zhang, \ \and \ Abdella Battou\thanks{V.S. Mai, T. Zhang and A. Battou are 
    with the National Institute of Standards
    and Technology (NIST). R.J. La is with 
    NIST and the University of 
    Maryland, College Park.}
}
\begin{document}

\maketitle
\thispagestyle{empty}
\pagestyle{plain}

\begin{abstract}
We propose a new distributed optimization algorithm for
solving a class of constrained optimization problems in which
(a) the objective function is separable (i.e., the
sum of local objective functions of agents), (b) the
optimization variables of distributed agents, which 
are subject to nontrivial local constraints,  
are coupled by global constraints, and (c) only noisy 
observations are available to estimate (the gradients
of) local objective functions. In many practical 
scenarios, agents may not be willing
to share their optimization variables with others. 
For this reason, we propose a distributed algorithm that does
not require the agents to share their optimization 
variables with each other; instead, each agent 
maintains a local estimate of the global constraint 
functions and share the estimate only with its neighbors. 
These local estimates of constraint functions are updated
using a consensus-type algorithm, while the local
optimization variables of each agent are updated
using a first-order method based on noisy
estimates of gradient. We prove that, when the agents 
adopt the proposed algorithm, their optimization
variables converge with probability 1 
to an optimal point of an approximated problem based 
on the penalty method. 
\end{abstract}

\begin{keywords}
Distributed optimization, penalty method, potential game,
stochastic optimization.
\end{keywords}

\section{Introduction}
    \label{sec:Introduction}
    
Large engineered systems, such as telecommunication networks and electric power grids, are becoming
more complex and often comprise subsystems that use different technologies or are controlled autonomously or by different entities. Consequently, centralized resource management or system optimization becomes increasingly impractical or impossible. 
This naturally calls for a distributed optimization
framework that will enable distributed subsystems or agents to optimize both their local performance and, in the process, the overall system performance. 
What complicates the problem further is that many practical systems have global constraints (e.g., end-to-end delay requirements) that couple the decisions of more than one agent; hence satisfying these constraints will require coordination among the agents.

We study the problem of designing a distributed
algorithm for a set of agents to 
solve a constrained optimization problem. The 
problem has both (a) separate {\em local} constraints 
for each agent and (b) {\em global} constraints that 
agents must satisfy together and, hence, couple the 
optimization variables of the agents. Moreover, the 
analytic expression for the global objective function or 
its gradient is not assumed known to every agent. 
For example, in many practical cases, the objective functions or constraint functions are
summable (e.g., the aggregate cost of all agents). In such
cases, the local objective functions of an agent
may be unknown to other agents. Also, in many engineered
systems, the actual costs for optimizing resource utilization need to be estimated based on measurements, which contain observation noise. 
Consequently, the agents 
must rely on {\em noisy} observations to update their
optimization variables. 

This setting applies to a wide range of real-world applications, 
including communication networks. For example, in the fifth-generation (5G) and future 6G systems, many 
heterogeneous subsystems (e.g., radio access networks vs. wired core 
networks) wish to minimize their own costs of delivering services and, at the same time, need to collectively assure end-to-end
quality-of-service (QoS), e.g., end-to-end delays or 
packet loss rates, for different applications such as automated manufacturing, telesurgery, and remote controlled aerial and 
ground vehicles. 

Since the global constraint functions depend on the
optimization variables of more than one agent, 
their values are not always known to all agents. Thus, handling such global constraints will require the agents to exchange information. In many cases, computing and disseminating the exact
values of global constraint functions to all agents will incur excessive computing and communication overheads, or cause prohibitive delays. To cope with the challenge, 
we adopt the penalty method and propose a new 
consensus-based algorithm for agents to estimate the global 
constraint function values in a distributed manner. 
We prove that, under some technical conditions, 
the proposed algorithm ensures almost sure 
convergence to an optimal point when the 
problem with a suitable penalty 
function is convex.

\subsection{Related Literature}
    \label{subsec:Related}
    
Distributed optimization has attracted extensive attention and 
accumulated a large body of literature (e.g., \cite{Chang15, 
Cherukuri16, Nedic15, Nedic10, sundhar2010, sundhar2012new} and 
references therein), because it has broad applications and also 
serves as a foundation for distributed machine learning,
Here, we summarize only the most closely related studies
that considered stochastic constrained optimization 
problems with a summable objective function, and 
point out the key differences between them and our study. We 
emphasize that this is not meant to be an exhaustive list. 

Recently, consensus optimization has been an active
research area.
Srivastava and Nedi$\acute{\rm c}$~\cite{Srivastava11}
proposed a distributed stochastic optimization
algorithm for solving constrained optimization problems,
in which the feasible set is assumed to be the 
intersection of the feasible sets of individual agents. 
Each agent maintains a
local copy of the global optimization variables, which are updated
using a consensus-type algorithm. They showed that these
local copies converge asymptotically to a 
common optimal point with probability 1 (w.p.1). 
Bianchi and Jakubowicz~\cite{Bianchi13} proposed
a stochastic gradient algorithm for solving a
non-convex optimization problem. 
Every agent maintains a local copy of global optimization 
variables and updates its local copy using
a projected stochastic gradient algorithm that requires the knowledge of the global feasible set. 
The agents then exchange their local copies
with neighbors and update them using a consensus-type
algorithm. They proved that, under the proposed
algorithm, agents' local copies of optimization 
variables converge w.p.1 to the set of stationary
or Karush-Kuhn-Tucker (KKT) 
points of the global objective function.
In another study, Chatzipanagiotis and Zavlanos
\cite{Chatzi16} proposed a distributed algorithm
for solving a convex optimization problem 
with affine equality constraints
in the presence of noise. This algorithm is 
based on their earlier work called accelerated
distributed augmented Lagrangians 
\cite{Chatzi15, Chatzi17}, and requires exchanges of 
global optimization variables among all agents. 

We consider different settings in which the
objective functions are separable and local 
optimization variables are coupled via global constraints.
In order to cope with the coupling introduced by 
global constraints, each agent maintains local 
estimates of global constraint functions along with 
local optimization variables, and 
only the estimates of global constraint functions
are exchanged with neighbors. Our
algorithm is better suited for scenarios in which
agents may not want to share their own local 
optimization variables with each other. For instance, 
autonomous systems or domains with their own private networks 
may not wish to reveal how they manage their networks. Furthermore, a large system likely contains subsystems that utilize different 
networking technologies (e.g., WiFi networks, cellular 
radio access networks, and wired core networks), which
are managed differently. 
We also note that, as explained in \cite{Chang15}, the dual 
problem of the constrained optimization problem 
we consider in this paper can be formulated 
as a consensus optimization problem. However, 
existing algorithms and results in the literature are not 
applicable to our settings, especially with
noisy observations.

A more closely related line of research is distributed 
resource allocation. Many prior studies in this area focused 
on deterministic cases with linear resource constraints, 
where local objective functions are differentiable and strictly 
convex with Lipschitz continuous gradients that are available
to the agents (e.g.,~\cite{Chang15, Cherukuri16, 
nedic2018improved}). More recently, 
Yi et al.~\cite{yi2018distributed} considered stochastic
resource allocation problems with local constraints and
global {\em affine} equality constraints in the 
presence of both observation 
and communication noise, where local constraints are 
determined by continuously differentiable convex
functions. They proposed a new algorithm based 
on a primal-dual approach: each agent
maintains local multipliers and auxiliary variables
shared with other agents. 
These multipliers and auxiliary variables along with 
(primal) variables are updated using 
a combination of stochastic approximation (with 
decreasing step sizes) and consensus-type algorithms.

The distributed optimization problems 
we study can also be viewed as state-based 
games: Li and Marden 
\cite{Li14} formulated the problem of designing a distributed 
algorithm for solving an optimization problem with affine 
inequality constraints as one of designing 
{\em decoupled} utility functions for distributed
agents, using a penalty or barrier method.
This approach leads to a state-based potential 
game, whose potential function is the objective function of
an approximated problem, and the Nash equilibrium solves
the approximated problem. Consequently, under the assumption 
that the analytic
expressions for the objective functions are known 
and the exact gradients can be computed with no noise, 
they proved that when each agent tries to maximize 
its own local utility independently, their local decision
variables converge to an optimal point of the approximated 
problem.

{\bf {\em Our contributions:}}
All the above studies require exact local projections 
by assuming simple local constraint sets. In this paper, we 
relax this assumption and consider a distributed 
optimization problem where local objective functions are convex 
(possibly nonsmooth) and local constraints are not assumed to be
projection-friendly. We combine a stochastic subgradient 
method with a dynamic consensus tracking approach. Also, 
to deal with 
general local functional constraints, we adopt the idea of 
approximate projections from \cite{polyak2001random}; a similar 
approach is also used in \cite{lee2017approximate} for distributed 
consensus optimization.

In addition, our work extends the study 
by Li and Marden~\cite{Li14} to the case where the 
analytic expressions for the local objective functions of 
agents or their gradients are unavailable and instead 
need to be estimated using noisy observations. 
We demonstrate that the agents can 
track the global constraint functions using a simpler 
dynamic consensus tracking algorithm, even for
nonlinear global constraint functions. We prove that our 
algorithm converges to a Nash equilibrium of the aforementioned 
state-based potential game in \cite{Li14}, and no agent can 
increase its utility (equivalently, decrease its cost) in the 
state-based potential game via unilateral deviation.

{\bf Notation:} Define $\N := \{0, 1, 2
\ldots \}$ to be set of nonnegative integers.  
We use $x^+$ to denote $\max(0, x)$. Unless stated 
otherwise, all vectors are column vectors
and $\norm{\cdot}$
denotes the $\ell_2$ norm, i.e., $\norm{\cdot} 
= \norm{\cdot}_2$. Given a vector $\bx$ or a vector
function ${\bf f}$, we denote the $k$th element 
by $x_k$ and $f_k$, respectively. The vector of
zeros (resp. ones) of appropriate dimension is 
denoted by $\0$ (resp. $\1$). 
For a convex function $f:\mathcal{D} \subseteq \mathbb{R}^n \to \mathbb{R}$, 
we use $\partial f(\bx)$ to denote a subgradient of $f$ 
at a point $\bx \in \mathcal{D}$. When it is clear from the 
context, we will also use 
$\partial f(\bx)$ to denote the subdifferential at $\bx$. 
Given a closed convex set ${\cal S}$ and a vector $\bx$, 
$P_{{\cal S}}(\bx)$ denotes the projection of $\bx$ onto
the set ${\cal S}$ and $\dist_{\mathcal{S}}(\bx) := \| \bx - P_{{\cal S}}(\bx) \|$ is the distance from $\bx$ to $\mathcal{S}$. 
If $\mathcal{S}$ is a finite set, $|\mathcal{S}|$ denotes its cardinality. 
Given a sequence of random 
variables (RVs) or vectors ${\bf R}(t)$, $t \in \N$, 
we use ${\bf R}^t$ to denote the collection 
$\{{\bf R}(\tau) : \tau \in \{0, \ldots, t\}\}$.
%

\section{Setup and Proposed Algorithm}
    \label{sec:Setup+Algorithm}


    
We are interested in solving a constrained optimization 
problem of the form given below using a distributed
algorithm, where
(a) the objective function is separable, and (b) the
global (inequality) constraints couple the optimization
variables of the agents:
\begin{subequations}    \label{eq:GameForm}
	\begin{eqnarray}\label{ProbCVX_Game}
    \mbox{min}_{ \by \in {\cal G} } 
	&& \sum_{i \in \cA} \phi_i(\by_i)  \\
	\mbox{subject to}  
	&& \gb(\yb) := \sum_{i \in \mathcal{A}} \gb_{i}(\yb_i) 
	    \le \0 \label{eqConstraint}\\
	&& \yb_i \in \mathcal{G}_i \quad \mbox{for all } i \in \mathcal{A}, \label{eqLocalConstraint}    
	\end{eqnarray}
\end{subequations}
where $\cA$ is the set of $N$ agents, $\phi_i$ and 
$\yb_i$ are the local objective function and the local 
optimization variables, respectively, of 
agent $i$, $\by = (\by_i : i \in \cA)$ is the
vector of optimization variables, 
${\cal G}_i$ is the local constraint set of agent $i$, ${\cal G} := \prod_{i \in \cA} 
{\cal G}_i$ is the feasible set, and 
$\gb = (g_1, \ldots, g_K)$ is the vector consisting
of $K$ global constraint functions. 

We assume ${\cal G}_i = {\cal G}_{i0} \bigcap
\tilde{\cal G}_i$,
where ${\cal G}_{i0}$ is a nonempty  convex 
set assumed to be 
projection friendly (e.g., box, simplex, ball constraints), 
and the set $\tilde{\cal G}_i$ is given by 
$\tilde{\cal G}_i = \{ \yb_i \ | \ 
c_{ik}(\yb_i) \leq 0, \ k \in \tilde{\cal K}_i \}$, 
where $c_{ik}$'s are local constraint functions, and 
$\tilde{\cal K}_i$ is the set of agent $i$'s 
local (inequality) constraints. We are
interested in scenarios in which the constraint set 
$\tilde{\cal K}_i$ is large or $\tilde{\cal G}_i$
is not projection friendly. 
Here, $\mathcal{G}_{i0}$ can be regarded as a hard constraint while $\tilde{\mathcal{G}}_i$ represents soft constraints.

Unlike in \cite{Chatzi16, Li14}, we do not 
assume that the constraint functions are affine. Instead, we 
assume that each agent $i$ knows 
its contributions to global constraint function given by 
$\gb_i$, but not $\gb_j$, $j \in \cA \setminus \{i\}$.
This assumption is reasonable in many practical problems 
with distributed decision makers.

A popular approach to constrained optimization is 
the penalty method, which adds a penalty to the
objective function when one or more constraints are
violated. Although a general penalty 
function can be used in our problem, we assume a 
specific penalty function to facilitate our exposition.
Consider the following approximated problem 
with a penalty function:
\begin{equation}    \label{ProbCVX_Appr}
\mbox{min}_{ \by \in {\cal G} } \quad 
	\sum_{i \in \cA} \phi_i(\by_i)  
	    + \frac{\mu}{2N} \sum_{k \in {\cal K}_G}
	        g^+_k(\yb)^2
    =: \Phi(\yb), 
\end{equation}
where $\mu>0$ is a penalty parameter, ${\cal K}_G$ is the 
set of $K$ global constraints, and the second term 
in $\Phi$ is the penalty function. 
The original 
problem in \eqref{eq:GameForm} is recovered by 
letting $\mu \to \infty$. 
Throughout the paper, we assume that $\mu$ is fixed 
and known to all the agents.
The function $\Phi$ is 
convex on ${\cal G}$ provided that $\phi_i$ and 
$\gb_i$, $i \in \cA$, are convex.


The usual gradient projection method does not 
lead to a distributed algorithm because the 
penalty function and its gradient are coupled. 
Furthermore, we are interested in scenarios
in which (a) the analytic expression of local 
objective function $\phi_i$
and its gradient are unknown to agent $i$ and
(b) only noisy measurements are available to 
estimate them. 
To address these issues, we propose a new 
algorithm that requires each agent $i$ to keep an 
estimate $\eb_i(t)$ of the constraint functions 
$\bg(\yb(t))$, which are updated via dynamic 
consensus tracking.
These estimates are used to approximate the 
gradient of the penalty function and are updated
using a consensus-type algorithm. 

\subsection{Proposed Algorithm}
    \label{subsec:Algorithm}

Every agent $i \in {\cal A}$ will first randomly 
choose its initial local optimization
variables $\yb_i(0))$ in $\mathcal{G}_{i0}$ and its
estimate of (average) global constraint functions
$\eb_i(0) = \gb_i(\yb_i(0))$ based
on its own contributions.
Subsequently, at each iteration, it updates them 
according to the following algorithm: 
\begin{subequations}     \label{algorithm} 
\beqa
\hspace{-.25in}\zb_i(t) 
	\!\myeq  P_{{\cal G}_{i0}} \Big( \yb_i(t) - \gamma_t \big( \qb_i(t) + \bV_i(t)\big) \Big)  \label{algorithm-a} \\
\hspace{-.25in}\yb_i(t\!+\!1) 
	\!\myeq P_{{\cal G}_{i0}} \Big( \zb_i(t) 
	- \beta_{it} \sum_{k\in \mathcal{K}_{it}}
	\frac{c^+_{ik}(\zb_i(t))}
	{\| {\bf d}_{ik} \|^2} {\bf d}_{ik} \Big) 
	\label{algorithm-b} \\
\hspace{-.25in}\eb_i(t\!+\!1) 
	\!\myeq \!\sum_{j \in \mathcal{A}} w_{ij}(t) \eb_j(t) 
	\!+\! \gb_i(\yb_i(t\!+\!1)) \!-\! \gb_i(\yb_i(t))\label{eq:update_e}
\eeqa
\end{subequations}
where 
\beqa
\qb_{i}(t)  
	\!\myeq \partial \phi_i(\yb_{i}(t)) +  \mu \sum_{k \in {\cal K}_G} \partial g_{ik}(\yb_i(t)) e^+_{ik}(t), 
	\label{eq:update_q}
\eeqa 
$\bV_{i}(t)$ are random vectors,
$\gamma_t$ is a step size at iteration $t$,\footnote{We assume that the agents adopt  a common step size sequence, for example, based on 
	a global clock.} 
$\mathcal{K}_{it} \subset \tilde{{\cal K}}_i$ in \eqref{algorithm-b} is a random set of indices selected by agent $i$  
independently at each iteration $t$,%
\footnote{Note that one could replace the set of constraints $c_{ik}(\yb_i) \le 0, \ k \in \tilde{\cal K}_i$ with one constraint $\bar{c}_{i}(\yb_i) \le 0$ where $\bar{c}_{i}(\yb) = \max_{k\in \tilde{\mathcal{K}_i}} c_{ik}(\yb_i)$. Computationally, this, however, requires evaluations of all the constraint functions, which may be impractical when $|\tilde{\mathcal{K}_i}|$ is large.
}
and ${\bf d}_{ik}$ is a subgradient of 
$c_{ik}$ at $\zb_{i}(t)$ when  
$c_{ik}(\zb_i(t)) >  0$ and ${\bf d}_{ik} = {\bf d} 
\neq \0$ otherwise. 
The value of ${\bf d}$ is unimportant
since the summand is equal to zero in this case.

For simplicity, we assume that ${\cal K}_{it}$
is chosen according to a uniform distribution ${\cal U}_i$
over the set ${\cal S}_i := \{ S \subset \tilde{{\cal K}}_i
\ | \ |S| = s_i\}$ for some $s_i \geq 1$,
and take any 
$\beta_{it}$ in some $[\underline{\beta}_i,\bar{\beta}_i] \subset 
(0,2/s_i)$ for all iterations $t$.
In practice, we may choose $s_i \ll |\tilde{{\cal K}}_i|$ 
to limit the number of constraints considered in 
\eqref{algorithm-b} (even $s_i =1$). 
We assume that agent $i$ uses the same weight $\beta_{it}$ for all chosen local constraints in \eqref{algorithm-b}. But, it can employ different weights for different constraints which, for example, depend on the
value of constraint functions. 
We assume that ${\cal G}_{i0}$ is simple enough 
so that $P_{{\cal G}_{i0}}(\cdot)$ can be evaluated efficiently.

Note that agent $i\in \mathcal{A}$ 
needs to exchange its estimate $\eb_i$ only with its 
neighbors reflected in the weight matrix 
$W(t) = [w_{ij}(t) : i, j \in \cA]$, which is allowed to be 
time-varying. This update rule is designed based on a 
dynamic consensus algorithm (e.g., \cite{kia2019tutorial}) in order
to track the average of the global constraint functions. 
As a result, we can view $\eb_i(t)$ as a local estimate 
of the average global constraint functions.

We note that, in our setting, the subgradient $\partial 
\phi_i(\yb_{i}(t))$ in~\eqref{eq:update_q} is 
unavailable to agent 
$i$; instead, only the second term in \eqref{eq:update_q} 
can be computed by agent $i$ and a noisy 
estimate of the subgradient
given by the sum $\partial \phi_i(\yb_{i}(t)) + \bV_i(t)$ is 
available to the agent for the update in \eqref{algorithm-a}.


The random projection step in \eqref{algorithm-b}
is borrowed from \cite{polyak2001random} (see also 
\cite{nedic2011random}), but we do not restrict to 
selecting only a single constraint at each iteration.
The idea is that if the constraint 
$c_{ik}$ is violated 
at $\zb_i(t)$ for some randomly selected $k$, i.e., 
$c_{ik}(\zb_i(t)) > 0$, in order to 
reduce this violation, the algorithm updates the 
optimization variables in the direction of $-{\bf d}_{ik}$ 
with a step size $\beta_{it} 
\frac{c_{ik}(\zb_i(t))}{\| {\bf d}_{ik} \|^2}$
according to \eqref{algorithm-b}. 
This projection step, though simple, is beneficial in dealing with nontrivial and computationally intractable constraints, including linear matrix inequalities and robust linear inequalities (e.g., \cite{polyak2001random}).

For each $t \in \N$, we define ${\cal F}_t$ to be the 
$\sigma$-field generated by 
$\{\by(0), \bV^{t-1}, (\mathcal{K}_{i}^{t-1} : i \in \cA)\}$, 
where $\mathcal{K}_i^{t-1} =\{\mathcal{K}_{is} : 
0 \leq s < t\}$ and $\mathcal{K}_{is}$, $s \in \N$, is
the random index set of the constraints chosen by agent 
$i$ in \eqref{algorithm-b} at iteration $s$.
Throughout the paper, we use $\Es{t}{\cdot}$ to 
denote the conditional expectation $\E{\cdot \ | \ 
{\cal F}_t}$.

\subsection{Assumptions}
    \label{subsec:Assumptions}

We impose the following assumptions on the optimization problem
in \eqref{ProbCVX_Appr}.

\begin{assm}    \label{assm_prob}
Problem \eqref{ProbCVX_Appr} satisfies the following:
\benum

\item[a.] The local constraint sets ${\cal G}_i$, $i \in \cA$, 
are nonempty and convex, and ${\cal G}_{i0}$
are closed and convex.

\item[b.] The functions $\phi_i$ and $g_{ik}$, 
$i \in \cA$ and $k \in {\cal K}_G$, are convex 
and $L$-Lipschitz continuous on ${\cal G}_{i0}$ for some $L>0$.
Moreover, $\Phi$ is $L$-Lipschitz continuous on 
$\prod_{i \in \cA} {\cal G}_{i0}$.\footnote{This
assumption holds, for example, when ${\cal G}_{i0}$, 
$i \in \cA$, are compact.}

\item[c.] The local constraint functions $c_{ik}$, $i \in \cA$ 
and $k \in \tilde{\mathcal{K}}_i$, 
are convex and also $L$-Lipschitz continuous on ${\cal G}_{i0}$. 
Moreover, there exists $C>0$ such that, for all $i \in \cA$, 
\begin{equation}
\dist^2_{\mathcal{G}_i}(\zb) \le C\, \Es{\mathcal{K}\sim \mathcal{U}_i}{\sum_{k\in\mathcal{K}}c^+_{ik}(\zb)^2}, \quad \zb \in \mathcal{G}_{i0},    \label{eqRegularityCond}
\end{equation}
where $\Es{\mathcal{K} \sim \mathcal{U}_i}{\cdot}$ denotes the expectation when the random set $\mathcal{K}$ 
is chosen according to ${\cal U}_i$
over the set ${\cal S}_i$. 
	
\item[d.] The optimal set of \eqref{ProbCVX_Appr}, denoted by 
${\cal Y}^*$, is nonempty. 
\eenum
\end{assm}


Here, without loss of generality we assume the same Lipschitz constant $L$. Note that we do not assume differentiability of any involved functions or compactness of constraint sets. Assumption~\ref{assm_prob}-c, in particular condition~\eqref{eqRegularityCond}, is known to be quite general and plays an important role in the convergence analysis of algorithms involving random projection in \eqref{algorithm-b}; see \cite{nedic2011random} for a further discussion on this assumption as well as sufficient conditions for~\eqref{eqRegularityCond} to hold.

We allow the distribution of the 
perturbation ${\bf V}(t) := (\bV_i(t): i \in \cA)$ 
to depend on the optimization
variables $\by(t)$. The distribution of ${\bf V}(t)$
when $\by(t) = \by$ is denoted by 
$\mu_\by$.

\begin{assm}    \label{assm_noise}
	The perturbation satisfies (i) $\int \bv \ \mu_{\yb}(d\bv) 
	= \0$ for all $\yb \in {\cal G}$ and (ii)
	$\sup_{\yb \in {\cal G}} \int \norm{ {\bf v} }^2 
	\mu_{\yb}(d{\bf v}) =: \nu < \infty$.   
\end{assm}

For each $t \in \N$, define ${\cal E}_t = \{(i, j) 
\in \cA \times \cA : w_{ij}(t) > 0 \}$.

\begin{assm} \label{assm_graph}
	There exists $Q \in \N$ such that, for all $k \geq 0$,
	the graph $(\mathcal{A}, \bigcup_{l=1}^Q\mathcal{E}_{k+l})$ 
	is strongly connected (i.e., $Q$-strongly connected). 
	In addition, $W(t)$ is doubly stochastic for all $t \in 
	\N$, and there exists $w_{\min} > 0$ such that, for all 
	$t \in \N$, all nonzero weights $w_{ij}(t)$ lie in 
	$[w_{\min}, 1]$.
\end{assm}

We will use below the following standard assumptions on the step sizes. 

\begin{assm}    \label{assm_stepsize}
	Steps sizes $\gamma_t$, $t \in \N$, are positive and  satisfy 
	(i) $\sum_{t \in \N} \gamma_t = \infty$ and 
	(ii) $\sum_{t \in \N} \gamma_t^2 < \infty$.
\end{assm}

\subsection{Preliminaries}
Let us first introduce some 
notation. For every $t \in \N$, define
$\eb(t) := (\eb_i(t) : i \in \cA)$, 
${\gb}(t) := (\gb_i(\yb_i(t)) : i \in \cA)$,
\begin{align}
\bar{\eb}(t) 
:= \frac{1}{N} \sum_{i \in \mathcal{A}} \eb_i(t), \
\mbox{ and } \ \bar{\gb}(t):= \frac{1}{N} \gb(\yb(t)).
\end{align}



The following result is a direct consequence of the assumption 
that weight matrices $W(t)$, $t \in \N$, are doubly stochastic. 
\begin{lemma}   \label{lemma_AverageError}
	Under Assumption~\ref{assm_graph}, the following holds:
	\begin{eqnarray} 
	\bar{\eb}(t) 
	=  \bar{\gb}(t), \quad t \in \N.
	\label{eqAverageError}
	\end{eqnarray} 
\end{lemma}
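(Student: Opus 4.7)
The plan is to prove this by a straightforward induction on $t$, exploiting the double stochasticity of $W(t)$ together with the telescoping design of the dynamic consensus tracking update in \eqref{eq:update_e}.

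First, I would establish the base case $t = 0$ directly from the initialization $\eb_i(0) = \gb_i(\yb_i(0))$ specified at the start of the algorithm description. Averaging over $i \in \cA$ gives $\bar\eb(0) = \frac{1}{N}\sum_{i\in\cA}\gb_i(\yb_i(0)) = \bar\gb(0)$.

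For the inductive step, assuming $\bar\eb(t) = \bar\gb(t)$, I would sum \eqref{eq:update_e} over $i \in \cA$ and divide by $N$. The key computation is to swap the order of summation in $\frac{1}{N}\sum_{i\in\cA}\sum_{j\in\cA} w_{ij}(t)\eb_j(t)$ and apply column stochasticity of $W(t)$, namely $\sum_{i\in\cA} w_{ij}(t) = 1$ for each $j$ (this is where Assumption~\ref{assm_graph} enters). This yields $\frac{1}{N}\sum_{j\in\cA}\eb_j(t) = \bar\eb(t)$, so
\begin{equation*}
\bar\eb(t+1) = \bar\eb(t) + \bar\gb(t+1) - \bar\gb(t).
\end{equation*}
Applying the inductive hypothesis gives $\bar\eb(t+1) = \bar\gb(t+1)$, completing the induction.

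There is no real obstacle here; the statement is essentially a sanity check on the dynamic consensus update, and the only subtlety is ensuring that one invokes column stochasticity (which follows from double stochasticity) rather than row stochasticity, since the average is taken over the index $i$ of the receiving agent. The proof is short enough that a couple of displayed equations should suffice to present it in full.
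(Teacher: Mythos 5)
Your proof is correct and is exactly the standard telescoping-plus-column-stochasticity argument that the paper is invoking when it calls the lemma ``a direct consequence of the assumption that weight matrices $W(t)$ are doubly stochastic'' (the paper omits the details entirely). The induction, the use of the initialization $\eb_i(0)=\gb_i(\yb_i(0))$ for the base case, and the observation that it is the column sums $\sum_{i\in\cA}w_{ij}(t)=1$ that matter are all accurate.
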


We will also use the following result on the mixing property 
of a sequence of doubly stochastic weight matrices
\cite[Theorem~4.2]{sundhar2012new}.

\begin{lemma}   \label{lemConsensusWithNoiseQConnected}
	Consider the following iterates for all $i \in \cA$.
	\begin{equation*}
	\btheta_i(t+1) = \textstyle\sum_{j \in \mathcal{A}} 
	w_{ij}(t) \btheta_j(t) 
	+ \boldsymbol{\varepsilon}_i(t+1), 
	\quad t \in \N,
	\end{equation*}
	where $\boldsymbol{\varepsilon}_i(t) \in \R^{K}$ is arbitrary,  
	and $W(t)$ satisfies Assumption~\ref{assm_graph}. 
	Then, there exist $C_1>0$ and $C_2>0$ such that
	\begin{align*}
	\| \btheta_i(t) \!-\! \bar{\btheta}(t) \| 
	\le & C_1 N^{-1}\sigma^{t} 
	+ N^{-1}\textstyle\sum_{j \in \cA} 
	\| \boldsymbol{\varepsilon}_j(t) \|
	+ \| \boldsymbol{\varepsilon}_i(t) \| \\
	& +  C_2 N^{-1} \textstyle\sum_{s=1}^{t-1}
	\sigma^{{t-s}} 
	\sum_{j \in \cA} \| \boldsymbol{\varepsilon}_j(s) \|,
	\end{align*}
	where $\bar{\btheta}(t) = \sum_{j \in \mathcal{A}}
	\btheta_j(t) / N$, and $\sigma = \big( 1 - 
	\frac{w_{\min}}{4 N^2} \big)^{1 / Q}$. 
\end{lemma}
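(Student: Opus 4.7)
The plan is to represent the recursion in compact form via the backward products $\Phi(t,s):= W(t-1)W(t-2)\cdots W(s)$ (with $\Phi(s,s):=I$) and then split the resulting linear expression into an initial-condition part, a special current-step contribution ($s=t$), and a past-noise part that will be damped geometrically by the mixing of the $W(t)$'s. Unrolling the recursion gives
\begin{equation*}
\btheta_i(t) = \sum_{j \in \cA} [\Phi(t,0)]_{ij}\,\btheta_j(0) + \sum_{s=1}^{t} \sum_{j \in \cA} [\Phi(t,s)]_{ij}\,\bepsilon_j(s),
\end{equation*}
and, since each $W(t)$ is doubly stochastic, averaging yields $\bar{\btheta}(t) = \bar{\btheta}(0) + \sum_{s=1}^{t} \bar{\bepsilon}(s)$. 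Subtracting,
\begin{equation*}
\btheta_i(t) - \bar{\btheta}(t) = \sum_{j \in \cA} \bigl([\Phi(t,0)]_{ij} - \tfrac{1}{N}\bigr)\btheta_j(0) + \sum_{s=1}^{t} \sum_{j \in \cA} \bigl([\Phi(t,s)]_{ij} - \tfrac{1}{N}\bigr)\bepsilon_j(s).
\end{equation*}

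The main obstacle, and the heart of the argument, is the geometric mixing bound for the backward products of doubly stochastic matrices under the $Q$-strong connectivity of Assumption~\ref{assm_graph}: there exists $C>0$ such that, for all $t>s\ge 0$,
\begin{equation*}
\max_{i,j\in\cA} \bigl|[\Phi(t,s)]_{ij} - \tfrac{1}{N}\bigr| \le C\,\sigma^{t-s}, \quad \sigma = \bigl(1 - w_{\min}/(4N^2)\bigr)^{1/Q}.
\end{equation*}
This is a Wolfowitz/Hajnal-type estimate; the standard argument proceeds block-by-block, showing that over any $Q$ consecutive time steps the joint connectivity (combined with $w_{ij}(t)\ge w_{\min}$ on active edges and double stochasticity) forces the ergodicity coefficient of the $Q$-block product to contract by at least the factor $1 - w_{\min}/(4N^2)$. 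Since the explicit constants are already worked out in \cite[Theorem~4.2]{sundhar2012new}, the cleanest path is to invoke their estimate at this step rather than reprove it.

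Given the mixing bound, the remainder of the proof is routine bookkeeping. The initial-condition term is bounded by $N C\sigma^{t}\max_j \|\btheta_j(0)\|$, which we absorb into the stated $C_1 N^{-1}\sigma^t$ by choosing $C_1$ sufficiently large (depending on $\btheta(0)$, $N$, and $C$). The $s=t$ summand is special because $\Phi(t,t)=I$: the coefficient of $\bepsilon_i(t)$ equals $1 - \tfrac{1}{N}$ while each $\bepsilon_j(t)$ with $j\ne i$ appears with coefficient $-\tfrac{1}{N}$, producing exactly the two non-decaying terms $\|\bepsilon_i(t)\|$ and $N^{-1}\sum_{j\in\cA}\|\bepsilon_j(t)\|$ in the claimed bound. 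Finally, for $1\le s\le t-1$ the mixing bound together with the triangle inequality gives at most $C\sigma^{t-s}\sum_{j\in\cA}\|\bepsilon_j(s)\|$; setting $C_2:=NC$ reproduces the last summand, and assembling the three pieces yields the lemma.
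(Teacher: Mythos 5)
Your proposal is correct and follows essentially the same route as the paper, which supplies no independent proof but simply invokes \cite[Theorem~4.2]{sundhar2012new}; your unrolling via backward products, the separation of the non-contracting $s=t$ term, and the appeal to the cited geometric mixing estimate for the hard step reconstruct exactly the standard argument behind that result. The bookkeeping (including absorbing the initial-condition term into $C_1$ and setting $C_2 = NC$) checks out.
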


The lemma states that, over a larger timescale, 
the $Q$-strong connectivity has a similar mixing 
effect as a static, strongly connected graph. 

The following results will be used frequently in our 
proofs. 
\begin{lemma}   \label{lemma:cauchy2}
	For any $a,b\in \mathbb{R}$ and any $\eta > 0$,  
	\begin{equation}
	2 a b \leq \eta a^2 + \eta^{-1} b^2. \label{cauchy}
	\end{equation}
\end{lemma}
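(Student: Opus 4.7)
The plan is to derive the inequality from the nonnegativity of a suitable perfect square, which is the standard proof of Young's inequality with exponents $(2,2)$. Specifically, for any $a,b \in \mathbb{R}$ and any $\eta > 0$, the quantity $\sqrt{\eta}\, a - \eta^{-1/2} b$ is a real number, so its square is nonnegative:
\begin{equation*}
\bigl(\sqrt{\eta}\, a - \eta^{-1/2} b\bigr)^2 \geq 0.
\end{equation*}
Expanding the left-hand side gives $\eta a^2 - 2ab + \eta^{-1} b^2 \geq 0$, and rearranging yields $2ab \leq \eta a^2 + \eta^{-1} b^2$, which is exactly the claim.

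The argument requires no case analysis on the signs of $a$ and $b$: the algebraic identity in the expansion is valid for all reals, and $\eta > 0$ guarantees that $\eta^{-1}$ and $\sqrt{\eta}$ are well-defined and positive. As an alternative route, one could observe that the case $ab \leq 0$ is immediate since the right-hand side is always nonnegative, and for $ab > 0$ the bound reduces to the classical AM--GM inequality applied to $\eta a^2$ and $\eta^{-1} b^2$, namely $(\eta a^2 + \eta^{-1} b^2)/2 \geq \sqrt{a^2 b^2} = |ab|$. Both routes are equivalent, but the completion-of-squares route is cleanest because it avoids a case split.

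No obstacle is anticipated; this is a one-line algebraic fact. Its role in the paper will be as a standard splitting tool, used to absorb cross terms of the form $2\langle \cdot,\cdot \rangle$ arising in the expansion of squared norms along the proposed iterates into weighted squared-norm terms, with the free parameter $\eta$ tuned later (typically against the step size $\gamma_t$ or the mixing rate $\sigma$ from Lemma~\ref{lemConsensusWithNoiseQConnected}) to close the recursion in the convergence analysis.
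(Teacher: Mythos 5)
Your proof is correct: the completion-of-squares identity $\bigl(\sqrt{\eta}\,a - \eta^{-1/2}b\bigr)^2 = \eta a^2 - 2ab + \eta^{-1}b^2 \geq 0$ immediately gives the claim. The paper states this classical inequality without proof, so there is no authorial argument to compare against; your route is the standard one and needs no further justification.
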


\begin{lemma}\label{lemBound_P}
Under Assumptions~\ref{assm_prob} and \ref{assm_graph}, for any $i\in \mathcal{A}$, 
\begin{align}
\|\qb_i(t)\|^2 
    \le 2L^2 \big( 1 + \mu^2 K \|\eb_i(t) - \bar{\eb}(t) \|^2 \big), 
    \quad t\in \N. \label{eqBound_Q}
\end{align}
\end{lemma}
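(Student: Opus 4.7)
The inequality is essentially a direct consequence of the Lipschitz assumptions combined with standard norm inequalities applied to the expression for $\qb_i(t)$ in \eqref{eq:update_q}. I would first split the squared norm using $\|a+b\|^2 \le 2\|a\|^2 + 2\|b\|^2$ to separate the local objective subgradient from the penalty correction:
\begin{equation*}
\|\qb_i(t)\|^2 \le 2\|\partial\phi_i(\yb_i(t))\|^2 + 2\mu^2 \Big\|\sum_{k\in\mathcal{K}_G}\partial g_{ik}(\yb_i(t))\, e_{ik}^+(t)\Big\|^2 .
\end{equation*}
Assumption~\ref{assm_prob}-b states that $\phi_i$ is $L$-Lipschitz on $\mathcal{G}_{i0}$, so every subgradient satisfies $\|\partial\phi_i(\yb_i(t))\|\le L$. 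This contributes the constant $2L^2$ that appears on the right-hand side of \eqref{eqBound_Q}.

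For the penalty term I would apply Cauchy--Schwarz over the $K=|\mathcal{K}_G|$ global constraints to obtain
\begin{equation*}
\Big\|\sum_{k}\partial g_{ik}(\yb_i(t))\, e_{ik}^+(t)\Big\|^2 \le K \sum_{k}\|\partial g_{ik}(\yb_i(t))\|^2\, (e_{ik}^+(t))^2,
\end{equation*}
and again invoke $L$-Lipschitz continuity of $g_{ik}$ (also from Assumption~\ref{assm_prob}-b) to replace $\|\partial g_{ik}(\yb_i(t))\|$ by $L$. This yields a $\mu^2 K L^2 \sum_k (e_{ik}^+(t))^2$ contribution, which after factoring out $2L^2$ matches the factor $\mu^2 K$ in \eqref{eqBound_Q}, provided the remaining sum is identified with $\|\eb_i(t)-\bar{\eb}(t)\|^2$.

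The last identification is the step I expect to be the main obstacle, since replacing $\sum_k (e_{ik}^+(t))^2$ by $\|\eb_i(t) - \bar{\eb}(t)\|^2$ is not automatic. The ingredient I would use is Lemma~\ref{lemma_AverageError}, which yields $\bar{e}_k(t) = g_k(\yb(t))/N$: whenever $\bar{e}_k(t)\le 0$, the componentwise inequality $e_{ik}^+(t) \le |e_{ik}(t) - \bar{e}_k(t)|$ holds, because if $e_{ik}^+(t)>0$ then $e_{ik}(t)>0\ge \bar{e}_k(t)$ and hence $|e_{ik}(t)-\bar{e}_k(t)|\ge e_{ik}(t) = e_{ik}^+(t)$; squaring and summing over $k$ then closes the argument. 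If global feasibility of $\yb(t)$ is not available, I would instead write $e_{ik}(t) = (e_{ik}(t)-\bar{e}_k(t))+\bar{e}_k(t)$ and use $(e_{ik}^+(t))^2 \le e_{ik}(t)^2 \le 2(e_{ik}(t)-\bar{e}_k(t))^2 + 2\bar{e}_k(t)^2$, in which case \eqref{eqBound_Q} would hold up to an additional term involving $\|\bar{\eb}(t)\|^2$ that is separately controlled by the dynamic consensus update \eqref{eq:update_e}. Either way, the first three steps are standard consequences of Lipschitz bounds and elementary norm inequalities, and the constants propagate transparently to the form in \eqref{eqBound_Q}.
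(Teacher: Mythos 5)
There is a genuine gap, and it is exactly at the step you flag as the ``main obstacle'': the quantity $\sum_k (e_{ik}^+(t))^2$ cannot be identified with (or bounded by) $\|\eb_i(t)-\bar\eb(t)\|^2$. Your first fix requires $\bar e_k(t)\le 0$ for all $k$, i.e., $g_k(\yb(t))\le 0$ --- global feasibility of the iterate --- which is never assumed and generically fails here: the penalty method is designed precisely so that iterates may violate the global constraints. Your fallback produces an extra additive term proportional to $\mu^2 K L^2\|\bar\eb(t)\|^2 = \mu^2 K L^2\|\gb(\yb(t))\|^2/N^2$, which is not present in \eqref{eqBound_Q}, is not controlled by the consensus update (it is the ``signal,'' not the disagreement), and would break the downstream analysis in Theorem~\ref{thm_Eyt1w}, where only the disagreement $a_t=\sum_i\|\eb_i(t)-\bar\eb(t)\|^2$ is allowed to appear multiplying $\gamma_t^2$.

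The missing idea is that the penalty part of $\qb_i(t)$ must be compared against the corresponding part of the true partial subgradient of $\Phi$, not against zero. Since $\partial_{\yb_i}\Phi(\yb) = \partial\phi_i(\yb_i) + \frac{\mu}{N}\sum_k g_k^+(\yb)\,\partial g_{ik}(\yb_i)$ and Lemma~\ref{lemma_AverageError} gives $\bar e_k(t) = g_k(\yb(t))/N$, hence $\bar e_k^+(t)=g_k^+(\yb(t))/N$, one writes
\begin{equation*}
\qb_i(t) = \partial_{\yb_i}\Phi(\yb(t)) + \mu\textstyle\sum_{k\in\mathcal{K}_G}\partial g_{ik}(\yb_i(t))\big(e_{ik}^+(t)-\bar e_k^+(t)\big).
\end{equation*}
The first term is bounded by $L$ using the assumed $L$-Lipschitz continuity of $\Phi$ itself (Assumption~\ref{assm_prob}-b), and the second by $\mu L\sum_k|e_{ik}^+(t)-\bar e_k^+(t)|\le \mu L\|\eb_i(t)-\bar\eb(t)\|_1$ using the $1$-Lipschitz continuity of $x\mapsto x^+$; squaring and using $\|\cdot\|_1^2\le K\|\cdot\|_2^2$ gives \eqref{eqBound_Q}. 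Your Lipschitz bounds on $\partial\phi_i$ and $\partial g_{ik}$ and the norm inequalities are all fine, but without this re-centering at $\bar e_k^+$ the lemma as stated cannot be reached.
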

\begin{proof}
We drop the dependence on $t$ for notational simplicity. It follows from~\eqref{eq:update_q} and \eqref{eqAverageError} that
\begin{align*}
\|\qb_i\| 
&= \| \partial_{\yb_i} \Phi(\yb) 
    + \mu \sum_{k \in {\cal K}_G} \partial g_{ik}(\yb_i)
    (e^+_{ik} - \bar{e}^+_k) \| \\
&\le  \|\partial_{\yb_i} \Phi(\yb)\| 
    + \mu \sum_{k \in {\cal K}_G} \|\partial g_{ik}(\yb_i)\|
    \cdot |e^+_{ik} - \bar{e}^+_k| \\
&\le L(1 + \mu\|\eb_i(t) - \bar{\eb}(t)\|_1), 
\end{align*}  
where we used the Lipschitz continuity of $\Phi, g_{ik}$ and the function $\max(\cdot, 0)$. 
Thus, 
\beqa
\|\qb_i(t)\|^2 
\myleq 2L^2 \big( 1 + \mu^2 \|\eb_i(t) - \bar{\eb}(t) \|_1^2 \big)
    \lb
\myleq 2L^2 \big( 1 + \mu^2 K \|\eb_i(t) - \bar{\eb}(t) \|^2 \big)
    \label{eq:qi_lowerbound}
\eeqa
which follows from the equivalence of norms.
\end{proof}

We also make use of the following classical results on the 
convergence of a sequence of nonnegative RVs. 
\begin{lemma} \cite[Lemma~10, p. 49]{polyak1987introduction} 
    \label{lemConvergenceRandomSeq}
Let $\{v_t : t \in \N\}$ be a sequence of nonnegative 
RVs with $\E{ v_0 } < \infty$. Suppose 
$\{\alpha_t : t \in \N \}$ and $\{\beta_t : t \in \N \}$ 
are deterministic scalar sequences satisfying
$\alpha_t \in [0,1]$, $\beta_t > 0$, 
$\sum_{t \in \N} \alpha_t = \infty$, 
$\sum_{t \in \N} \beta_t < \infty$,
$\lim_{t\to \infty} \beta_t/\alpha_t = 0$,
and
\beqan
& \E{ v_{t+1} | v^t } 
\leq (1-\alpha_t) v_t + \beta_t \ \mbox{ w.p.1 
    for all } t \in \N,  & 
\eeqan
where $v^t = \{v_s: 0 \leq s \leq t\}$.
Then, $v_t$ converges to 0 w.p.1, and $\lim_{t\to\infty} 
\E{ v_t } = 0$.
\end{lemma}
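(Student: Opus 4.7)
The plan is to decouple the conclusion into two parts---almost sure convergence and convergence of expectations---and dispatch each with a standard technique.

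First, I would rewrite the hypothesis in supermartingale form as
\[
\E{ v_{t+1} \mid v^t } \le v_t - \alpha_t v_t + \beta_t,
\]
treating $\alpha_t v_t \ge 0$ as ``slack'' and $\beta_t \ge 0$ as ``drift.'' Because $\sum_t \beta_t < \infty$ and $v_t \ge 0$, a direct application of the Robbins--Siegmund almost-sure convergence theorem for nonnegative supermartingales yields simultaneously (i) almost sure convergence $v_t \to v_\infty$ for some nonnegative, finite $v_\infty$, and (ii) $\sum_t \alpha_t v_t < \infty$ almost surely. The divergence $\sum_t \alpha_t = \infty$ then forces $v_\infty = 0$ almost surely: on the event $\{v_\infty > 0\}$ one has $v_t \ge v_\infty/2$ for all $t \ge T(\omega)$ large enough, so $\sum_t \alpha_t v_t \ge (v_\infty/2)\sum_{t \ge T} \alpha_t = \infty$, contradicting (ii).

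For $\E{v_t} \to 0$, I would take expectations of the hypothesis to obtain the deterministic recursion $u_{t+1} \le (1-\alpha_t) u_t + \beta_t$, where $u_t := \E{v_t}$. Given $\varepsilon > 0$, the condition $\beta_t/\alpha_t \to 0$ lets us pick $T$ so that $\beta_t \le \varepsilon \alpha_t$ for all $t \ge T$. A short case analysis then gives $\limsup_t u_t \le \varepsilon$: if $u_t \le \varepsilon$ for some $t \ge T$, then $u_{t+1} \le (1-\alpha_t)\varepsilon + \varepsilon\alpha_t = \varepsilon$ and the bound propagates forever after; otherwise $u_t > \varepsilon$ for every $t \ge T$, in which case $u_{t+1} - \varepsilon \le (1-\alpha_t)(u_t - \varepsilon)$ combined with $\prod_{s \ge T}(1-\alpha_s) = 0$ (which follows from $\sum_s \alpha_s = \infty$) forces $u_t \downarrow \varepsilon$. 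Since $\varepsilon > 0$ was arbitrary, $u_t \to 0$.

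The main obstacle is the invocation of Robbins--Siegmund; everything else is routine manipulation. If a fully self-contained derivation is preferred, I would instead define the auxiliary process $M_t := v_t + \sum_{s \ge t} \beta_s$, observe that $\E{M_{t+1} \mid v^t} \le M_t - \alpha_t v_t \le M_t$ so that $\{M_t\}$ is a nonnegative supermartingale, apply Doob's convergence theorem to conclude that $M_t$---and hence $v_t$---converges almost surely, and recover $\sum_t \alpha_t v_t < \infty$ almost surely by summing the recursion telescopically from $0$ to $T$, taking expectations, and invoking Fatou's lemma as $T \to \infty$. From there the remainder of the argument is unchanged.
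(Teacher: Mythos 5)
Your proof is correct. Note first that the paper offers no proof of this lemma at all: it is quoted as a classical result from Polyak's book (\cite[Lemma~10, p.~49]{polyak1987introduction}), so there is no in-paper argument to compare against; what you have written is a legitimate self-contained derivation of the cited fact. Two remarks on your route. For the almost-sure part, the Robbins--Siegmund theorem you invoke is, up to notation, exactly the paper's own Lemma~\ref{lemConvergenceRandomSeq2} (Polyak's Lemma~11): take the multiplicative perturbation there to be zero and $u_t = \alpha_t v_t \ge 0$, and you get both the a.s.\ convergence of $v_t$ and $\sum_{t\in\N}\alpha_t v_t < \infty$ a.s.; citing that lemma instead of an external theorem would keep the argument entirely inside the paper's toolkit, and your subsequent contradiction on $\{v_\infty>0\}$ using $\sum_{t\in\N}\alpha_t=\infty$ is exactly right. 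For the expectation part, your deterministic recursion $u_{t+1}\le(1-\alpha_t)u_t+\beta_t$ with the threshold $\beta_t\le\varepsilon\alpha_t$ and the two-case analysis is the classical Chung--Polyak argument and is essentially how the result is proved in the source; the only housekeeping worth adding is the one-line induction showing $\E{v_t}<\infty$ for all $t$ (from $\E{v_0}<\infty$, $\alpha_t\in[0,1]$, and $\sum_{t\in\N}\beta_t<\infty$) so that the subtraction $u_t-\varepsilon$ is legitimate. Your fallback via the nonnegative supermartingale $M_t = v_t+\sum_{s\ge t}\beta_s$ is also sound; there the summability $\sum_{t\in\N}\alpha_t v_t<\infty$ a.s.\ is most cleanly obtained by monotone convergence (Tonelli) applied to the telescoped expectations, though Fatou on the monotone partial sums works as well.
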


\begin{lemma} \cite[Lemma~11, p. 50]{polyak1987introduction} \label{lemConvergenceRandomSeq2}
Let $\{v_t : t \in \N \}, \{u_t : t \in \N \},$ 
$\{\alpha_t : t \in \N \}$ and $\{\beta_t : t \in \N \}$ be 
sequences of nonnegative RVs satisfying the 
following conditions w.p.1: $\sum_{t \in \N} \alpha_t < 
\infty$, $\sum_{t \in \N} \beta_t < \infty$, and 
\beqan
\E{ v_{t+1} | v^t, \alpha^t, \beta^t, u^t} 
\myleq (1 \! + \! \alpha_t)v_t \! - \! u_t \! + \! 
    \beta_t \ \mbox{ for all } t \in \N.
\eeqan
Then, w.p.1, (i) $\sum_{t \in \N} u_t < \infty$ and (ii) 
$v_t$ converges to some nonnegative RV $v$. 
\end{lemma}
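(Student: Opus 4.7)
The plan is the classical Robbins--Siegmund argument via martingale convergence: absorb the multiplicative factor $(1+\alpha_t)$ into a normalizing product, then construct a nonnegative supermartingale whose almost-sure convergence delivers both conclusions at once. Let $\mathcal{F}_t$ denote the $\sigma$-field generated by $(v^t,\alpha^t,\beta^t,u^t)$, so the hypothesis reads $\E{v_{t+1}\mid\mathcal{F}_t}\le (1+\alpha_t)v_t - u_t + \beta_t$ w.p.1.

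First I would set $\Pi_0 := 1$ and $\Pi_{t+1} := \prod_{s=0}^{t}(1+\alpha_s)$. Since $\sum_{t\in\N}\alpha_t<\infty$ w.p.1 and $\log(1+x)\le x$ for $x\ge 0$, $\Pi_t$ is nondecreasing and converges w.p.1 to some finite limit $\Pi_\infty\ge 1$. Define $\tilde v_t := v_t/\Pi_{t+1}$, $\tilde u_t := u_t/\Pi_{t+1}$, and $\tilde\beta_t := \beta_t/\Pi_{t+1}$. Because $\Pi_{t+1}$ is $\mathcal{F}_t$-measurable and equals $(1+\alpha_t)\Pi_t$, dividing the hypothesis by $\Pi_{t+1}$ yields
\begin{equation*}
\E{\tilde v_{t+1}\mid\mathcal{F}_t} \le \tilde v_t - \tilde u_t + \tilde\beta_t \quad \text{w.p.1.}
\end{equation*}

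Then I would form $M_t := \tilde v_t + \sum_{s=t}^{\infty}\tilde\beta_s$, which is nonnegative and, by the previous inequality, satisfies $\E{M_{t+1}\mid\mathcal{F}_t}\le M_t - \tilde u_t\le M_t$, so $\{M_t\}$ is a nonnegative supermartingale. Doob's convergence theorem gives $M_t\to M_\infty$ w.p.1 for some nonnegative integrable limit. Because $\sum_{s\ge t}\tilde\beta_s\to 0$ w.p.1, this forces $\tilde v_t$ to converge w.p.1, and multiplying by $\Pi_{t+1}\to\Pi_\infty\in[1,\infty)$ shows $v_t\to v$ w.p.1 for some nonnegative RV $v$, giving~(ii). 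For~(i), taking expectations in the supermartingale inequality and telescoping over $t=0,\dots,T-1$ gives $\sum_{t=0}^{T-1}\E{\tilde u_t}\le \E{M_0}$; letting $T\to\infty$ and using monotone convergence yields $\E{\sum_t\tilde u_t}<\infty$, so $\sum_t\tilde u_t<\infty$ w.p.1, and multiplying by $\Pi_\infty<\infty$ gives $\sum_t u_t<\infty$ w.p.1.

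The main obstacle is the integrability needed to invoke Doob's theorem, since the hypothesis does not assert $\E{v_0}<\infty$ or finite expectation of the $\tilde\beta_s$-tail. I would handle this by a standard localization: for each $N$ define the stopping time $\tau_N := \inf\{t : v_t + \sum_{s<t}\alpha_s + \sum_{s<t}\beta_s > N\}$, apply the supermartingale argument to the bounded stopped process $\{M_{t\wedge\tau_N}\}$, and read off both statements on the event $\{\tau_N=\infty\}$. Since $v_0<\infty$, $\sum\alpha_s<\infty$, and $\sum\beta_s<\infty$ w.p.1, we have $\bP{\tau_N=\infty}\to 1$ as $N\to\infty$, and a countable union over $N$ recovers the unconditional conclusions.
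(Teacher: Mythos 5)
First, note that the paper does not prove this lemma at all; it is quoted from Polyak's book, so there is no in-paper proof to compare against. Your overall strategy --- normalize by $\prod_s(1+\alpha_s)$ to absorb the multiplicative factor, build a nonnegative supermartingale, invoke Doob's convergence theorem, and localize to handle integrability --- is exactly the standard Robbins--Siegmund argument, so the architecture is right. However, two steps as written do not go through. The central one is the construction $M_t=\tilde v_t+\sum_{s=t}^{\infty}\tilde\beta_s$: the tail sum $\sum_{s\ge t}\tilde\beta_s$ depends on the \emph{future} values $\beta_s$, so $M_t$ is not $\mathcal{F}_t$-adapted, and the claimed inequality $\E{M_{t+1}\mid\mathcal{F}_t}\le M_t-\tilde u_t$ would require $\E{\sum_{s\ge t+1}\tilde\beta_s\mid\mathcal{F}_t}\le\sum_{s\ge t+1}\tilde\beta_s$ pointwise, which is false in general (it is harmless only when the $\beta_t$ are deterministic, which the lemma does not assume). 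The adapted object you want is $M_t=\tilde v_t-\sum_{s<t}\tilde\beta_s+\sum_{s<t}\tilde u_s$, which \emph{is} a supermartingale but is only bounded below by $-\sum_{s}\tilde\beta_s$; that is precisely why the localization is needed, and this version delivers (i) for free, since the nondecreasing term $\sum_{s<t}\tilde u_s$ must remain bounded once $M_t$ converges and $\sum_{s<t}\tilde\beta_s$ does. There is also an index slip: with $\tilde v_t:=v_t/\Pi_{t+1}$, dividing the hypothesis by $\Pi_{t+1}$ gives $\E{v_{t+1}/\Pi_{t+1}\mid\mathcal{F}_t}\le v_t/\Pi_t-\tilde u_t+\tilde\beta_t$, so the factor $(1+\alpha_t)$ is not absorbed; you need $\tilde v_t:=v_t/\Pi_t$ while keeping $\tilde u_t$ and $\tilde\beta_t$ normalized by $\Pi_{t+1}$.

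Second, your stopping time $\tau_N$ includes $v_t$ itself, and the claim $\bP{\tau_N=\infty}\to1$ as $N\to\infty$ then amounts to asserting $\sup_t v_t<\infty$ w.p.1, which is part of what is being proved --- the argument is circular as stated. Localize only on the quantities whose summability is hypothesized, e.g. $\tau_N=\inf\{t:\sum_{s<t}(\alpha_s+\beta_s)>N\}$, and intersect with the $\mathcal{F}_0$-measurable event $\{v_0\le N\}$ to secure integrability of the stopped process; then $M_{t\wedge\tau_N}\ge-N$, Doob's theorem applies, and $\bigcup_N\big(\{\tau_N=\infty\}\cap\{v_0\le N\}\big)$ has full probability by the hypotheses alone. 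With these repairs your proof becomes the standard one.
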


\section{Convergence Analysis}   
    \label{sec:asynch}

In this section, we demonstrate that when all agents
update their local optimization variables ($\yb_i(t)$) 
and local estimates of global constraint functions ($\eb_i(t)$)
using our proposed algorithm 
in \eqref{algorithm}, $\yb(t)$ converges to the optimal 
set ${\cal Y}^*$ w.p.1. (Theorem~\ref{thm:main_convg}). 
We prove this
result with the help of a series of auxiliary results. 

First, the projection in \eqref{algorithm-b} satisfies the 
following inequality, the proof of which is inspired by \cite{lee2017approximate, nedic2011random, polyak2001random}. 

\begin{lemma} \label{lemmaSubgradProjection}
 Under Assumption~\ref{assm_prob},
for any $\yb_i \in {\cal G}_{i}$, we have
	\begin{subequations}
	\beqa
	&& \hspace{-0.4in} \| \yb_i(t\!+\!1) - \yb_i \|^2 
		\le  \| \zb_i(t) \!-\! \yb_i \|^2 
		- \tilde\beta_{it} \!\!\! \sum_{k\in \mathcal{K}_{it}}\!\! c^+_{ik}(\zb_i(t))^2 
		\label{eq:lemma7a} \\
	&& \hspace{-0.4in} c^+_{ik}(\zb_i(t))^2 
	\ge \textstyle \frac{\tau-1}{\tau} c^+_{ik}(\yb_i(t))^2 
	- 2\tau \gamma_{t}^2 L^2\big(2L^2 + \|\bV_i(t)\|^2 \big) 
	    \lb
	&& \hspace{0.55in} 
	- 4\tau \gamma_{t}^2 L^4\mu^2 K \|\eb_i(t) 
	- \bar{\eb}(t) \|^2
	    \label{eq:lemma7b}
	\eeqa
	\end{subequations}
	for all $i\in \mathcal{A}$, $k\in \mathcal{K}$,
	and $\tau > 0$, where $\tilde{\beta}_{it} = \frac{(2 - s_i\beta_{it})\beta_{it}} {L^2}$. 
\end{lemma}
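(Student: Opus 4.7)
The plan is to establish the two inequalities separately. Part (a) is a standard approximate-projection estimate for the random projection step \eqref{algorithm-b}, while part (b) transfers a bound at $\zb_i(t)$ to a bound at $\yb_i(t)$ by absorbing the stochastic-gradient displacement produced by step \eqref{algorithm-a}.

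For (a), I would start from the update rule \eqref{algorithm-b} and use that $\yb_i \in \mathcal{G}_i \subseteq \mathcal{G}_{i0}$, so the non-expansiveness of $P_{\mathcal{G}_{i0}}$ gives
\[
\|\yb_i(t{+}1) - \yb_i\|^2 \le \Big\|\zb_i(t) - \beta_{it}\!\!\sum_{k \in \mathcal{K}_{it}}\!\!\tfrac{c^+_{ik}(\zb_i(t))}{\|\bd_{ik}\|^2}\bd_{ik} - \yb_i\Big\|^2.
\]
Expanding the square produces a linear cross-term and a quadratic term. For the cross-term, I would invoke the subgradient inequality $c_{ik}(\zb_i(t)) + \bd_{ik}\T(\yb_i - \zb_i(t)) \le c_{ik}(\yb_i) \le 0$, which yields $\bd_{ik}\T(\zb_i(t) - \yb_i) \ge c^+_{ik}(\zb_i(t))$ whenever the coefficient is nonzero (the opposite case contributing zero). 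For the quadratic term, a Cauchy--Schwarz bound $\|\sum_k \bx_k\|^2 \le s_i \sum_k \|\bx_k\|^2$ over the $s_i$ summands is the right tool. Using $\|\bd_{ik}\| \le L$ (from Lipschitz continuity in Assumption~\ref{assm_prob}-c) to bound $1/\|\bd_{ik}\|^2 \ge 1/L^2$ and collecting coefficients yields precisely $\tilde\beta_{it} = (2 - s_i\beta_{it})\beta_{it}/L^2$, which is positive by the assumption $\beta_{it} < 2/s_i$.

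For (b), the idea is first to replace $c^+_{ik}(\yb_i(t))$ by $c^+_{ik}(\zb_i(t))$ using the fact that $x \mapsto x^+$ is $1$-Lipschitz and $c_{ik}$ is $L$-Lipschitz, so $|c^+_{ik}(\yb_i(t)) - c^+_{ik}(\zb_i(t))| \le L\|\yb_i(t) - \zb_i(t)\|$. Writing $c^+_{ik}(\yb_i(t))^2 = (c^+_{ik}(\zb_i(t)) + \delta)^2$ with $|\delta| \le L\|\zb_i(t) - \yb_i(t)\|$ and applying Lemma~\ref{lemma:cauchy2} with the parameter choice matching $\eta = \tau - 1$ gives an inequality of the form $c^+_{ik}(\zb_i(t))^2 \ge \tfrac{\tau-1}{\tau} c^+_{ik}(\yb_i(t))^2 - (\tau - 1) L^2 \|\zb_i(t) - \yb_i(t)\|^2$. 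To bound $\|\zb_i(t) - \yb_i(t)\|^2$, I would use non-expansiveness of $P_{\mathcal{G}_{i0}}$ on \eqref{algorithm-a} (noting $\yb_i(t) \in \mathcal{G}_{i0}$) to get $\|\zb_i(t) - \yb_i(t)\|^2 \le \gamma_t^2 \|\qb_i(t) + \bV_i(t)\|^2 \le 2\gamma_t^2(\|\qb_i(t)\|^2 + \|\bV_i(t)\|^2)$, then apply Lemma~\ref{lemBound_P} to bound $\|\qb_i(t)\|^2$ by $2L^2(1 + \mu^2 K \|\eb_i(t) - \bar{\eb}(t)\|^2)$. Finally, using $\tau - 1 \le \tau$ in the resulting coefficients converts the factor $(\tau-1)$ into the cleaner $\tau$ appearing on the right-hand side of \eqref{eq:lemma7b}.

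The main subtlety I anticipate is bookkeeping rather than a conceptual hurdle: making sure the subgradient inequality argument in (a) is handled correctly in both cases $c_{ik}(\zb_i(t)) > 0$ and $c_{ik}(\zb_i(t)) \le 0$ (since $\bd_{ik}$ is defined differently, but the summand vanishes in the second case), and being careful that the relaxation from $(\tau-1)$ to $\tau$ in (b) is a valid loosening in the direction of a lower bound. The rest is routine algebraic manipulation combining the projection non-expansiveness with the previously established Lemmas~\ref{lemma:cauchy2} and~\ref{lemBound_P}.
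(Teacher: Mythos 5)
Your proposal is correct and follows essentially the same route as the paper: non-expansiveness of $P_{\mathcal{G}_{i0}}$ plus the subgradient inequality and the Cauchy--Schwarz bound with the factor $s_i$ for \eqref{eq:lemma7a}, and the Lipschitz transfer from $\zb_i(t)$ to $\yb_i(t)$ combined with Lemma~\ref{lemma:cauchy2} and Lemma~\ref{lemBound_P} for \eqref{eq:lemma7b}. The only cosmetic difference is your parameterization $\eta=\tau-1$ (valid for $\tau>1$, with the case $\tau\le 1$ trivial since $\frac{\tau-1}{\tau}\le 0$) followed by the loosening $\tau-1\le\tau$, whereas the paper takes $\eta=1/\tau$ and obtains the coefficient $\tau$ directly.
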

\begin{proof}
The proof of \eqref{eq:lemma7a} is similar to those given 
in~\cite{nedic2011random, polyak2001random}, but we provide 
it here for completeness.
For any $\yb_i \in {\cal G}_{i}$, 
	\begin{align}
		& \| \yb_i(t\!+\!1) - \yb_i \|^2 
		\leq  \big\| \zb_i(t) \!-\! \yb_i - \textstyle \beta_{it}\sum_{k\in \mathcal{K}_{it}}  \frac{c^+_{ik}(\zb_i(t))}
		{\| {\bf d}_{ik} \|^2} {\bf d}_{ik} \big\|^2 \lb
		&= \|\zb_i(t) \!-\! \yb_i \|^2  
		+ \beta_{it}^2 \Big\|  \textstyle  \sum_{k\in \mathcal{K}_{it}}  \frac{c^+_{ik}(\zb_i(t))}{\| {\bf d}_{ik} \|^2}{\bf d}_{ik} \Big\|^2 \label{eq:lemma7-1} \\
		&\quad + 2 \textstyle \beta_{it}\sum_{k\in \mathcal{K}_{it}}  (\yb_i - \zb_i(t))^T{\bf d}_{ik}  \frac{c^+_{ik}(\zb_i(t))}
		{\| {\bf d}_{ik} \|^2}, \nonumber
	\end{align}
	where the first inequality follows from \eqref{algorithm-b} and the nonexpansiveness property of projection. Since $c^+_{ik}$ 
	is a convex function and ${\bf d}_{ik} \in \partial c^+_{ik}(\zb_i(t))$, 
	it follows that 
	$$(\yb_i \! - \! \zb_i(t) )^T{\bf d}_{ik} 
	\le c^+_{ik}(\yb_i) \! - \! c^+_{ik}(\zb_i(t)) = - c^+_{ik}(\zb_i(t)),$$ 
	where we used $c^+_{ik}(\yb_i) = 0$ for all $\yb_i 
	\in {\cal G}_{i}$.
	Moreover, by Cauchy-Schwarz inequality
	\begin{align*}
	\textstyle\Big\| \sum_{k\in \mathcal{K}_{it}}  \frac{c^+_{ik}(\zb_i(t))}{\| {\bf d}_{ik} \|^2}{\bf d}_{ik} \Big\|^2 
	&\le s_i \textstyle\sum_{k\in \mathcal{K}_{it}} \frac{c^+_{ik}(\zb_i(t))^2}{\| {\bf d}_{ik} \|^2}. 
	\end{align*}
	As a result, using the above bounds in \eqref{eq:lemma7-1}, 
	\begin{align*}
		& \| \yb_i(t\!+\!1) - \yb_i \|^2 \\
		&\leq  \|\zb_i(t) \!-\! \yb_i \|^2  + (s_i\beta_{it}^2  - 2\beta_{it}) \textstyle \sum_{k\in \mathcal{K}_{it}} \frac{c^+_{ik}(\zb_i(t))^2}{\| {\bf d}_{ik} \|^2},
	\end{align*}
	and \eqref{eq:lemma7a} now follows from 
	Assumption~\ref{assm_prob}-c.
	
	For the second inequality in \eqref{eq:lemma7b}, note that
	\begin{align}
	& c^+_{ik}(\zb_i(t))^2 \!\ge\! 
	c^+_{ik}(\yb_i(t))^2 \!+\! 2(c^+_{ik}(\zb_i(t)) \!-\! c^+_{ik}(\yb_i(t)))c^+_{ik}(\yb_i(t)) \nnb\\
	&\ge 
	c^+_{ik}(\yb_i(t))^2 - 2|c^+_{ik}(\zb_i(t)) - c^+_{ik}(\yb_i(t))|c^+_{ik}(\yb_i(t)) \nnb \\
	&\ge
	\textstyle\frac{\tau-1}{\tau}c^+_{ik}(\yb_i(t))^2 - \tau|c^+_{ik}(\zb_i(t)) - c^+_{ik}(\yb_i(t))|^2 \label{eqBound_czy}
	\end{align}
	for any $\tau > 0$, where we used \eqref{cauchy}. 
	In addition, 
	\begin{align*}
	    \!&\!|c^+_{ik}(\zb_i(t)) - c^+_{ik}(\yb_i(t))|^2 \\
	    \!&\!\le L^2 \|\zb_i(t) - \yb_i(t)\|^2 \tag{Lipschitz continuity}\\
	    \!&\!\le L^2\gamma_t^2\|\qb_i(t) + \bV_i(t)\|^2 \tag{projection property}\\
	    \!&\!\le 2L^2\gamma_t^2(\|\qb_i(t)\|^2 + \|\bV_i(t)\|^2)\\
	    \!&\!\le 4L^2\gamma_t^2\big(L^2 ( 1 \!+\! \mu^2 K \|\eb_i(t) \!-\! \bar{\eb}(t) \|^2 ) \!+\!
	    \textstyle \frac{\|\bV_i(t)\|^2}{2}\big). \tag{by~\eqref{eqBound_Q}}
	\end{align*}
	Substituting this bound in \eqref{eqBound_czy} gives us
	\eqref{eq:lemma7b}. 
\end{proof}



Although our result is similar to those in \cite{lee2017approximate, nedic2011random}, it contains an extra term $\|\eb_i(t) - \bar{\eb}(t)\|^2$ in \eqref{eq:lemma7b}, which measures the disagreements among the agents' local estimates of the global constraint functions.  Roughly speaking, for the algorithm to converge, it is necessary that this term decays to $0$. Much of the challenge lies in properly 
bounding this term, which is not bounded a priori. 
To this end, let us define
\begin{align}
a_t := \textstyle\sum_{i\in \mathcal{A}} \|\eb_i(t) - \bar{\eb}(t)\|^2 \label{at}, \quad t \in \N.
\end{align} 

The following result relates $a_t$ with $\| \yb(t) \!-\! \wb\|^2$ and  $\Phi(\wb) \!-\! \Phi\big(P_{\mathcal{G}}(\yb(t)) \big)$ for any $\wb \in \mathcal{G}$ and any choice of step size $\gamma_t$. 

\begin{theorem}\label{thm_Eyt1w}
	Under Assumptions~\ref{assm_prob} and \ref{assm_noise},
	the following holds for any $t \in \N$, $\wb \in  \mathcal{G}$, and positive sequence 
	$\{\kappa_t : t \in \N \}$:
	\begin{align}
	\!& \Es{t}{\| \yb(t+1) - \wb\|^2} \nnb\\ 
	\!&\le\! (1\!+\!\kappa_t K)\| \yb(t) \!-\! \wb\|^2   \!+\! \gamma^2_{t}D_1 \!+\!  \gamma^2_t\big(\mu^2L^2 \kappa_t^{-1}  \!\!+\! D_2 \big) a_t \nnb\\
	& + 2\gamma_{t}\big[\Phi(\wb) \!-\! \Phi\big(P_{\mathcal{G}}(\yb(t)) \big) \big] \!-\! 2\rho\dist^2_{\mathcal{G}}(\yb(t)), \label{eq_Eyt1w}
	\end{align}
	where 
	$\rho =  \frac{\tilde{\beta}}{4C}$ with $\tilde{\beta} = \inf_{i,t}\tilde{\beta}_{it}$, 
	$D_1 = \frac{L^2}{\rho}  + 10(2L^2N + \nu)$ and $D_2 = 20L^2 K\mu^2$.
\end{theorem}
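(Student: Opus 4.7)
I start from Lemma~\ref{lemmaSubgradProjection}(a) applied with $\yb_i = \wb_i \in \mathcal{G}_i$, unfold $\|\zb_i(t)-\wb_i\|^2$ using the nonexpansiveness of $P_{\mathcal{G}_{i0}}$ in~\eqref{algorithm-a} (valid since $\wb_i\in\mathcal{G}_{i0}$), sum over $i\in\cA$, and take $\Es{t}{\cdot}$, which kills the linear $\bV(t)$-term by Assumption~\ref{assm_noise}. This yields a master inequality whose right-hand side has three nontrivial groups: a cross-term $-2\gamma_t\sum_i\qb_i(t)^T(\yb_i(t)-\wb_i)$, a second-moment $\gamma_t^2\Es{t}{\|\qb(t)+\bV(t)\|^2}$, and a distance-reduction $-\sum_i\tilde\beta_{it}\Es{t}{\sum_{k\in\mathcal{K}_{it}} c^+_{ik}(\zb_i(t))^2}$. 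The remaining work is to bound each group and add the three estimates.

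\textbf{Cross-term.} By Lemma~\ref{lemma_AverageError}, $\qb_i(t) = \partial_{\yb_i}\Phi(\yb(t)) + \bepsilon_i(t)$, where $\bepsilon_i(t) := \mu\sum_{k}\partial g_{ik}(\yb_i(t))(e^+_{ik}(t)-\bar e^+_k(t))$. Convexity of $\Phi$ gives $2\gamma_t\sum_i\partial_{\yb_i}\Phi(\yb(t))^T(\yb_i(t)-\wb_i)\ge 2\gamma_t[\Phi(\yb(t))-\Phi(\wb)]$, and the $L$-Lipschitz continuity of $\Phi$ combined with~\eqref{cauchy} converts the linear distance term into $2\gamma_t[\Phi(P_{\mathcal{G}}(\yb(t)))-\Phi(\yb(t))] \le \gamma_t^2 L^2/\rho + \rho\,\dist^2_{\mathcal{G}}(\yb(t))$; the $+\rho\,\dist^2_{\mathcal{G}}$ piece is kept aside for later cancellation. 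The error $-2\gamma_t\sum_i\bepsilon_i(t)^T(\yb_i(t)-\wb_i)$ is estimated by applying~\eqref{cauchy} \emph{term-by-term in the sum over $k$} and using $\|\partial g_{ik}\|\le L$; this is the key move that produces exactly the prefactor $K\kappa_t\|\yb(t)-\wb\|^2$ (instead of a loose $K^2\kappa_t$) together with the remainder $\gamma_t^2\mu^2 L^2\kappa_t^{-1} a_t$.

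\textbf{Remaining two groups.} For $\gamma_t^2\Es{t}{\|\qb+\bV\|^2}$ I use $\|\qb+\bV\|^2\le 2\|\qb\|^2+2\|\bV\|^2$, Lemma~\ref{lemBound_P}, and Assumption~\ref{assm_noise}(ii), obtaining $4L^2 N\gamma_t^2 + 2\nu\gamma_t^2 + 4L^2\mu^2 K\gamma_t^2 a_t$. For the distance-reduction group, I apply~\eqref{eq:lemma7b} with $\tau = 4$, take $\Es{\mathcal{K}_{it}\sim\mathcal{U}_i}{\cdot}$, and invoke Assumption~\ref{assm_prob}-c; the leading contribution becomes $-3\rho\,\dist^2_{\mathcal{G}_i}(\yb_i(t))$, since $(\tau-1)/\tau\cdot\tilde\beta/C = 3\rho$ with $\rho = \tilde\beta/(4C)$, while the $\gamma_t^2$ residuals are scaled by $s_i\tilde\beta_{it}\le 1/L^2$ (this bound follows from maximizing $(2-x)x$ on $(0,2)$ with $x := s_i\beta_{it}$, achieved at $\beta_{it}=1/s_i$). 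Summing $-3\rho\,\dist^2_{\mathcal{G}_i}(\yb_i(t))$ over $i$ yields $-3\rho\,\dist^2_{\mathcal{G}}(\yb(t))$, and combining with the $+\rho\,\dist^2_{\mathcal{G}}(\yb(t))$ left over from the cross-term step produces the advertised $-2\rho\,\dist^2_{\mathcal{G}}(\yb(t))$. Finally, aggregating the $\gamma_t^2$ constants and the $\gamma_t^2 a_t$ coefficients across all three groups yields exactly $D_1 = L^2/\rho + 10(2L^2 N + \nu)$ and $D_2 = 20L^2 K\mu^2$.

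\textbf{Main obstacle.} The argument is not conceptually deep but requires careful bookkeeping so that the constants line up: the choice $\tau = 4$ in~\eqref{eq:lemma7b} is forced by the need to cancel the $+\rho$ surplus from the Lipschitz step and leave $-2\rho$ net; the tight estimate $s_i\tilde\beta_{it}\le 1/L^2$ is crucial for the specific values in $D_1$ and $D_2$; and the term-wise application of~\eqref{cauchy} to $\bepsilon_i$ is what produces the exact prefactor $(1+\kappa_t K)$ on $\|\yb(t)-\wb\|^2$ rather than a weaker bound that would inflate either the $\|\yb(t)-\wb\|^2$ coefficient or the $a_t$ coefficient by a factor of $K$.
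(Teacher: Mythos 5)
Your proposal is correct and follows essentially the same route as the paper's proof: the same starting point (Lemma~\ref{lemmaSubgradProjection} with $\yb_i=\wb_i$ plus nonexpansiveness of the projection), the same three-way decomposition into cross-term, second moment, and distance-reduction terms, the same term-by-term Cauchy estimate yielding the $(1+\kappa_t K)$ prefactor, and the same choices $\tau=4$, $\rho=\tilde\beta/(4C)$, and $s_i\tilde\beta_{it}\le 1/L^2$ leading to the identical constants $D_1$ and $D_2$. No gaps.
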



\begin{proof}
First,  for any $\wb_i \in \mathcal{G}_i$, by Lemma~\ref{lemmaSubgradProjection}, we have
\begin{align}
\!\!\! \| \yb_i(t \!+\! 1) \!-\! \wb_i \|^2 \!\le\!  \| \zb_i(t) \!-\! \wb_i \|^2 \!-\! \tilde{\beta}_{it} \textstyle \sum_{k\in \mathcal{K}_{it}}   c^+_{ik}(\zb_i(t)) ^2. \nnb
\end{align} 
By the nonexpansive property of projection, the first term on 
the RHS can be expanded further as 
\begin{align*}
&\| \zb_i(t) \!-\! \wb_i \|^2 \le \big\| \yb_i(t) -\gamma_t\big( \qb_i(t) 
+ {\bf V}_i(t) \big) - \wb_i \big\|^2  \\
&=  \| \yb_i(t) - \wb_i\|^2 + \gamma_t^2 \|\qb_i(t) + {\bf V}_i(t) \|^2 \\
&\qquad + 2\gamma_t\big( \qb_i(t) + {\bf V}_i(t) \big)\T 
(\wb_i - \yb_i(t)). 
\end{align*} 
Thus, we have
\begin{align} 
&\| \yb_i(t+1) - \wb_i \|^2 \nnb \\
&\le  \| \yb_i(t) - \wb_i\|^2 -  \tilde{\beta}_{it}\textstyle\sum_{k\in \mathcal{K}_{it}} c^+_{ik}(\zb_i(t)) ^2 \nnb\\
& +2\gamma_t\big( \partial_{\yb_i}\Phi(\yb(t)) 
+ {\bf V}_i(t) \big)\T (\wb_i - \yb_i(t))  \nnb\\
& +2\gamma_t\mu\! \textstyle\sum_{k \in {\cal K}_G}  
\big| e^+_{ik}(t) 
\!\!- \bar{e}^+_k(t)\big| \cdot \big| \partial g_{ik}(\yb_i(t))\T(\wb_i \!- \yb_i(t)) \big| \nnb\\ 
& + \gamma_t^2 \|\qb_i(t) + {\bf V}_i(t) \|^2. \label{eqProofConvergence_2}
\end{align} 
We bound the last two terms in \eqref{eqProofConvergence_2}
as follows. First, 
\begin{align}
&\gamma_t^2 \|\qb_i(t) + {\bf V}_i(t) \|^2 
\leq 2\gamma_t^2 \big( \|\qb_i(t)\|^2 + \|{\bf V}_i(t) \|^2\big)  
    \nnb\\
&\le  2\gamma_t^2 \big( 2L^2 \mu^2 K \|\eb_i(t) - \bar{\eb}(t) \|^2 
    + 2L^2+ \|{\bf V}_i(t) \|^2\big), 
    \label{eqBoundI1}
\end{align}
where the second inequality follows from Lemma~\ref{lemBound_P}. 
Second, 
\begin{align}
&2\gamma_t\mu\!  \textstyle\sum_{k \in {\cal K}_G}  
\big| e^+_{ik}(t) 
\!\!- \bar{e}^+_k(t)\big| \cdot 
\big| \partial g_{ik}(\yb_i(t))\T(\wb_i \!- \yb_i(t)) \big| \nnb\\
&\le  \!\textstyle\sum_{k \in {\cal K}_G}  
2\gamma_t\mu L \big| e_{ik}(t) 
\!\!- \bar{e}_k(t)\big| \| \wb_i - \yb_i(t)\| \nnb \\
&\le \!\textstyle \sum_{k \in {\cal K}_G} \big( \gamma^2_t\mu^2 L^2 \kappa_t^{-1}
| e_{ik}(t) 
- \bar{e}_k(t)|^2 + \kappa_t \| \wb_i - \yb_i(t)\|^2 \big) \nnb\\
&\le \!\! \gamma^2_t\mu^2 L^2 \kappa_t^{-1}
\|\eb_i(t) - \bar{\eb}(t) \|^2 + \kappa_t K\| \wb_i - \yb_i(t)\|^2 \label{eqBoundI2}
\end{align}
for any $\kappa_t>0$, where we used the Lipschitz continuity of 
$g_{ik}$ in the first inequality, and \eqref{cauchy} of
Lemma~\ref{lemma:cauchy2} in the second inequality. 

Using the bounds of \eqref{eqBoundI1} and \eqref{eqBoundI2}  in~\eqref{eqProofConvergence_2} and then summing over $i\in 
\mathcal{A}$ gives us
\begin{align}
& \| \yb(t+1) - \wb\|^2 \nnb\\ 
&\le (1+\kappa_t K)\| \yb(t) - \wb\|^2 - \textstyle \sum_{i \in \cA}  \tilde{\beta}_{it}\textstyle\sum_{k\in \mathcal{K}_{it}} c^+_{ik}(\zb_i(t))^2 \nnb\\
&\quad+ 4NL^2\gamma_t^2 + 2\gamma_t^2\|{\bf V}(t) \|^2  + \gamma^2_t\mu^2(L^2 \kappa_t^{-1}  + 4L^2 K) a_t \nnb\\
&\quad +2\gamma_t\big( \partial \Phi(\yb(t)) 
+ {\bf V}(t) \big)\T (\wb- \yb(t)). \label{eqBound_ywu}
\end{align}
Now, using the convexity of $\Phi$ and Lemma~\ref{lemma:cauchy2}, 
we have 
\begin{align*}
&2\gamma_{t}\partial \Phi(\yb(t)) \T (\wb- \yb(t)) 
\le 2 \gamma_{t}\big(\Phi(\wb) - \Phi(\yb(t)) \big) \nnb\\
& =  2\gamma_{t} \big( \Phi(\wb) - \Phi(\ub)   + \Phi(\ub) - \Phi(\yb(t)) \big), \quad \forall \ub\in \mathcal{G}  \\
& \le  2\gamma_{t}\big(\Phi(\wb) - \Phi(\ub) \big)  + 2\gamma_{t}L\|\ub - \yb(t)\| \nnb\\
& \le  2\gamma_{t}\big(\Phi(\wb) - \Phi(\ub) \big)  + \gamma^2_{t}L^2\rho^{-1} + \rho \|\ub - \yb(t)\|^2 
\end{align*}
for any $\rho >0$. Taking $\ub = P_{\mathcal{G}}(\yb(t))$ yields 
\begin{align*}
&2\gamma_{t}\partial \Phi(\yb(t)) \T (\wb- \yb(t)) \\
&\le 2\gamma_{t}\big[\Phi(\wb) - \Phi\big(P_{\mathcal{G}}(y(t)) \big) \big]  + \gamma^2_{t}L^2 \rho^{-1} + \rho \dist^2_{\mathcal{G}}(\yb(t)).
\end{align*}
Using this bound in \eqref{eqBound_ywu} and then taking  
conditional expectation with Assumption~\ref{assm_noise} in 
place gives us
\begin{align}
& \Es{t}{\| \yb(t+1) - \wb\|^2} \nnb\\ 
&\le (1+\kappa_t K)\| \yb(t) - \wb\|^2 \lb
& \quad - \textstyle \sum_{i \in \cA} \tilde{\beta}_{it}\Es{t}{\textstyle\sum_{k\in \mathcal{K}_{it}} 
    c^+_{ik}(\zb_i(t))^2} \nnb \\
&\quad + \gamma^2_{t}(L^2 \rho^{-1} + 4NL^2 + 2\nu)
    + \gamma^2_t\mu^2(L^2 \kappa_t^{-1}  + 4L^2 K) a_t \nnb\\
&\quad + 2\gamma_{t}\big[\Phi(\wb) - \Phi\big(P_{\mathcal{G}}(\yb(t)) \big) \big] + \rho \dist^2_{\mathcal{G}}(\yb(t)). \label{eqBound_ywu2}
\end{align}
We now bound the second term on the RHS. By Lemma~\ref{lemmaSubgradProjection}, for any $\tau > 0$,
\begin{align*} 
&\textstyle \sum_{i \in \cA}  \tilde{\beta}_{it}\Es{t}{\textstyle\sum_{k\in \mathcal{K}_{it}} 
    c^+_{ik}(\zb_i(t))^2}  \nnb\\
&\ge 
\textstyle \sum_{i \in \cA}  \tilde{\beta}_{it} \Es{\mathcal{K} \sim \mathcal{U}_i}{\textstyle\sum_{k\in \mathcal{K}} 
    \textstyle \frac{\tau-1}{\tau} c^+_{ik}(\yb_i(t))^2} \nnb\\
&\quad - \textstyle \sum_{i \in \cA} \tilde{\beta}_{it} s_i
	2\tau \gamma_{t}^2 L^2\big(2L^2 + \Es{t}{\|\bV_i(t)\|^2} \big)  \nnb\\
&\quad - \textstyle \sum_{i \in \cA} \tilde{\beta}_{it} s_i
    4\tau \gamma_{t}^2 L^4\mu^2 K \|\eb_i(t) 
	- \bar{\eb}(t) \|^2.
\end{align*}
Taking $\tau=4$, using Assumption~\mbox{\ref{assm_prob}-c}, 
$\tilde{\beta} = \inf_{i,t}\tilde{\beta}_{it} $ and the fact  that 
$\tilde{\beta}_{it} s_i  
= \frac{(2- \beta_{it} s_i)\beta_{it} s_i }{L^2} \le \frac{1} {L^2}$, we then have
\begin{align} 
&\textstyle \sum_{i \in \cA} \tilde{\beta}_{it}\Es{t}{\textstyle\sum_{k\in \mathcal{K}_{it}} 
    c^+_{ik}(\zb_i(t))^2} \nnb\\
&\ge \textstyle \textstyle \sum_{i \in \cA}  \frac{3\tilde{\beta}}{4 C} \dist^2_{\mathcal{G}_i}(\yb_i(t)) - 8 \gamma_{t}^2 \big(2NL^2 + \nu\big)  - 
16 \gamma_{t}^2 L^2\mu^2 K a_t \nnb\\
&\ge \textstyle \frac{3\tilde\beta}{4C}\textstyle \dist^2_{\mathcal{G}}(\yb(t))  - 8\gamma_{t}^2 \big(2NL^2 + \nu \big)  - 
16\gamma_{t}^2 L^2\mu^2 K a_t .
\label{eq:cik_lowerbound}
\end{align}
Using this bound in~\eqref{eqBound_ywu2} with $\rho =  \frac{\tilde{\beta}}{4C}$ and then rearranging terms yields~\eqref{eq_Eyt1w}. 
This completes the proof.
\end{proof}

Note that $\tilde{\beta} = \inf_{i,t} \tilde{\beta}_{it}$ is strictly positive since we assume that $\beta_{it} \in [\underline{\beta}_i,\bar{\beta}_i] \subset 
(0,2/s_i)$, and thus, $\tilde{\beta}_{it} \ge \frac{(2 - s_i\bar{\beta}_{i})\underline{\beta}_{i}} {L^2} >0$ for all $i\in\mathcal{A}$ and $t\in \N$.

Note also that we introduced a positive sequence $\{\kappa_t : t \in \N \}$ in~\eqref{eq_Eyt1w}, which will be chosen appropriately for various intermediate results below. The following is obtained by choosing a constant sequence. 
\begin{coro}\label{coroBound_Edy1}
	For all  $t \in \N$, we have 
	\begin{align}
 	&\Es{t}{\|\yb(t\!+\!1) - P_\mathcal{G}(\yb(t)) \|^2} \nnb\\
 	&\qquad \le \epsilon_d \dist^2_{\mathcal{G}}(\yb(t))  + \gamma^2_{t}D_1 + \gamma^2_t D_3 a_t, \label{eqBound_Edy1}
	\end{align} 
	where $\epsilon_d= 1 - \rho$, and $D_3=\mu^2L^2 K \rho^{-1}\!+\! D_2.$ 
\end{coro}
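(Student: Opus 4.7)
The plan is to obtain the corollary as an immediate consequence of Theorem~\ref{thm_Eyt1w} by making two specific choices in its inequality~\eqref{eq_Eyt1w}. Since the bound in Theorem~\ref{thm_Eyt1w} holds for any $\wb \in \mathcal{G}$ and any positive sequence $\{\kappa_t\}$, the strategy is to select these so that (i) the RHS simplifies to depend only on $\dist^2_{\mathcal{G}}(\yb(t))$, $\gamma_t^2$, and $a_t$, and (ii) the coefficient of $\dist^2_{\mathcal{G}}(\yb(t))$ collapses to $1-\rho$.

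First, I would set $\wb = P_{\mathcal{G}}(\yb(t))$. This is a legal choice since $P_{\mathcal{G}}(\yb(t)) \in \mathcal{G}$, and it is $\mathcal{F}_t$-measurable so the conditional expectation on the LHS is unaffected. Two simplifications occur on the RHS of~\eqref{eq_Eyt1w}: the term $\|\yb(t) - \wb\|^2$ becomes exactly $\dist^2_{\mathcal{G}}(\yb(t))$ by definition, and the optimality gap $\Phi(\wb) - \Phi(P_{\mathcal{G}}(\yb(t)))$ vanishes identically. The resulting inequality reads
\begin{align*}
&\Es{t}{\|\yb(t+1) - P_{\mathcal{G}}(\yb(t))\|^2} \\
&\le (1 + \kappa_t K - 2\rho)\,\dist^2_{\mathcal{G}}(\yb(t)) + \gamma_t^2 D_1 + \gamma_t^2(\mu^2 L^2 \kappa_t^{-1} + D_2)\,a_t.
\end{align*}

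Second, to match the stated coefficient $\epsilon_d = 1-\rho$ of $\dist^2_{\mathcal{G}}(\yb(t))$, I would choose the constant sequence $\kappa_t \equiv \rho/K$. This forces $1 + \kappa_t K - 2\rho = 1 - \rho = \epsilon_d$, and substituting $\kappa_t^{-1} = K/\rho$ into the coefficient of $a_t$ produces $\mu^2 L^2 K \rho^{-1} + D_2 = D_3$, yielding exactly~\eqref{eqBound_Edy1}.

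There is no real obstacle here since the corollary is a direct instantiation of Theorem~\ref{thm_Eyt1w}; the only thing worth verifying is that the constant choice $\kappa_t = \rho/K$ is admissible (it is positive, as required) and that $P_{\mathcal{G}}(\yb(t))$ is well-defined (which follows from $\mathcal{G}$ being nonempty, closed, and convex by Assumption~\ref{assm_prob}-a). The remark about $\tilde{\beta}_{it} \ge (2 - s_i \bar{\beta}_i)\underline{\beta}_i / L^2 > 0$ preceding the corollary ensures $\rho > 0$ and hence $\epsilon_d < 1$, which will be essential when this bound is invoked in the subsequent convergence analysis.
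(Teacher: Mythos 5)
Your proposal is correct and matches the paper's own proof exactly: both substitute $\wb = P_{\mathcal{G}}(\yb(t))$ into~\eqref{eq_Eyt1w} to kill the $\Phi$-gap term and turn $\|\yb(t)-\wb\|^2$ into $\dist^2_{\mathcal{G}}(\yb(t))$, then take $\kappa_t \equiv \rho/K$ to obtain $\epsilon_d = 1-\rho$ and $D_3 = \mu^2 L^2 K \rho^{-1} + D_2$. Your added remarks on the measurability of $P_{\mathcal{G}}(\yb(t))$ and the positivity of $\rho$ are sound but not needed beyond what the paper already establishes.
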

\begin{proof}
	From \eqref{eq_Eyt1w} in Theorem~\ref{thm_Eyt1w} with $\wb = P_{\mathcal{G}}(\yb(t))$, we have, for any $\kappa_t>0$ and $t \in \N$,
	\begin{align}
	& \Es{t}{\|\yb(t\!+\!1) \!-\! P_\mathcal{G}(\yb(t)) \|^2} \nnb\\
	& \le (1+\kappa_t K  -2\rho ) \dist^2_{\mathcal{G}}(\yb(t)) 
	 + \gamma^2_{t}D_1 \!+\! \gamma^2_t\big(\mu^2L^2 \kappa_t^{-1}  \!\!+D_2 \big) a_t.  \nnb
	\end{align}
	Thus, \eqref{eqBound_Edy1} follows by taking $\kappa_t \equiv \frac{\rho}{K}$ for all $t \in \N$.
\end{proof}

Since $\dist^2_{\mathcal{G}}(\yb(t+1))\le \|\yb(t+1)-P_{\mathcal{G}}(\yb(t))\|^2$, a direct consequence of this corollary is that
\begin{align}
 \Es{t}{\dist^2_{\mathcal{G}}(\yb(t+1))}  \le \epsilon_d \dist^2_{\mathcal{G}}(\yb(t))  + \gamma^2_{t}D_1 
 + \gamma^2_t D_3 a_t. \label{eqBound_Edy2}
\end{align} 
This result hints at a form of contraction property of the sequence $\{ \dist^2_{\mathcal{G}}(\yb(t)) : t \in \N \}$. If $\{a_t : t \in \N \}$ were  bounded, together with Assumption~\ref{assm_stepsize}, 
\eqref{eqBound_Edy2} would immediately imply that $\lim_{t \to \infty}
\dist^2_{\mathcal{G}}(\yb(t)) = 0$. However, as $a_t$ depends on $\|  \gb(\yb(t+1)) - \gb(\yb(t))\|$, which in turn depends on $\| \yb(t+1) - \yb(t)\|$, such convergence does not follow from~\eqref{eqBound_Edy2}. Below, we shed some light on this through a customized linear coupling argument.

\begin{theorem} \label{thmEdyba}
    Suppose that Assumptions~\ref{assm_prob}--\ref{assm_graph} hold and step sizes satisfy $\lim_{t \to \infty} \gamma_t = 0$. Then,  
	there exist $\epsilon, \epsilon_a, \epsilon_b \in (0,1)$ such that the following holds for all $t$ sufficiently large:
	\begin{align}
	&\Es{t}{\dist^2_{\mathcal{G}}(\yb(t+1)) + \epsilon_b b_{t+1} + \epsilon_a a_{t+1}} \nnb\\
	& \le 
	\epsilon \big( \dist^2_{\mathcal{G}}(\yb(t))  +  \epsilon_b b_t \big) + \gamma_t^2 a_t(1+\epsilon_b)D_3   +  \mathcal{O}(p_t), \label{eqEdyba}
	\end{align}
	where $D_3$ is given in Corollary~\ref{coroBound_Edy1}, 
	\begin{align}
	&b_t  = \textstyle\sum_{s=0}^t\sigma^{{t- s}} 
	\big( \dist^2_{\mathcal{G}}(\yb(s)) + \frac{1}{2}\gamma_s^2 a_s D_3 \big), \ \mbox{ and } \nnb\\
	&p_t = \gamma^2_{t} + 
	\sigma^{2(t+1)} + \textstyle\sum_{s=0}^t \sigma^{{t- s}} \gamma_s^2\nnb
	\end{align}
	with $\sigma \in (0,1)$ given in Lemma~\ref{lemConsensusWithNoiseQConnected}. 
\end{theorem}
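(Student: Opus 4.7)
The plan is to construct the Lyapunov function $V_t := \dist^2_{\mathcal{G}}(\yb(t)) + \epsilon_b b_t + \epsilon_a a_t$ and establish the claimed one-step contraction for $\Es{t}{V_{t+1}}$ by combining three ingredients: Corollary~\ref{coroBound_Edy1} for $\Es{t}{\dist^2_{\mathcal{G}}(\yb(t+1))}$, a bound on $\Es{t}{a_{t+1}}$ obtained via the consensus-mixing Lemma~\ref{lemConsensusWithNoiseQConnected}, and the clean recursion $b_{t+1} = \sigma b_t + \dist^2_{\mathcal{G}}(\yb(t+1)) + \tfrac12\gamma_{t+1}^2 a_{t+1}D_3$, which is immediate from the definition of $b_t$. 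The role of $b_t$ is precisely to absorb the geometric-convolution history that appears when the mixing inequality is applied to bound $a_{t+1}$; its two summands (one involving $\dist^2_{\mathcal{G}}(\yb(s))$, one involving $\gamma_s^2 a_s$) are chosen to match exactly the two kinds of driving perturbations in that convolution.

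\textbf{Bounding $a_{t+1}$.} Apply Lemma~\ref{lemConsensusWithNoiseQConnected} to \eqref{eq:update_e} with $\btheta_i = \eb_i$ and $\bepsilon_i(s{+}1) = \gb_i(\yb_i(s{+}1)) - \gb_i(\yb_i(s))$. The $L$-Lipschitzness of $\gb_i$ gives $\|\bepsilon_i(s{+}1)\| \le L\|\yb_i(s{+}1)-\yb_i(s)\|$; the updates \eqref{algorithm-a}-\eqref{algorithm-b} combined with projection nonexpansiveness (through the intermediate point $P_{\mathcal{G}_i}(\zb_i(s))$) bound this displacement by $\gamma_s(\|\qb_i(s)\|+\|\bV_i(s)\|)$ plus a term of order $\dist_{\mathcal{G}_i}(\zb_i(s)) \le \dist_{\mathcal{G}_i}(\yb_i(s)) + \gamma_s\|\qb_i(s)+\bV_i(s)\|$. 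Squaring Lemma~\ref{lemConsensusWithNoiseQConnected}, applying Cauchy--Schwarz on the geometric sum, taking $\Es{t}{\cdot}$, and using Lemma~\ref{lemBound_P} for $\|\qb_i\|^2$ and Assumption~\ref{assm_noise} for $\|\bV_i\|^2$ should yield, for some constant $\alpha > 0$,
\begin{equation*}
\Es{t}{a_{t+1}} \le \alpha\, b_t + \mathcal{O}(p_t),
\end{equation*}
where the $\sigma^{2(t+1)}$ and $\sum_s \sigma^{t-s}\gamma_s^2$ parts of $p_t$ come from the transient $C_1 N^{-1}\sigma^{t+1}$ and from the noise/gradient variance, while the $\dist^2_{\mathcal{G}}(\yb(s))$ and $\gamma_s^2 a_s$ parts of $b_t$ come respectively from $\dist^2_{\mathcal{G}_i}(\yb_i(s))$ in the displacement bound and from $\gamma_s^2\|\qb_i(s)\|^2 \lesssim \gamma_s^2(1+a_s)$ via Lemma~\ref{lemBound_P}.

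\textbf{Assembling the contraction.} Take the conditional expectation of the $b_{t+1}$ identity and substitute Corollary~\ref{coroBound_Edy1} for the $\dist^2_{\mathcal{G}}$ term together with the $a_{t+1}$ bound above for the $\gamma_{t+1}^2 a_{t+1}$ term; this produces $\Es{t}{b_{t+1}} \le (\sigma + \mathcal{O}(\gamma_{t+1}^2))\, b_t + \epsilon_d\,\dist^2_{\mathcal{G}}(\yb(t)) + \gamma_t^2 D_3 a_t + \mathcal{O}(p_t)$. Adding Corollary~\ref{coroBound_Edy1} (weight $1$), this bound (weight $\epsilon_b$), and the $a_{t+1}$ bound (weight $\epsilon_a$) gives
\begin{equation*}
\Es{t}{V_{t+1}} \le (1{+}\epsilon_b)\epsilon_d\, \dist^2_{\mathcal{G}}(\yb(t)) + \bigl[\epsilon_b\sigma + \epsilon_a\alpha + \mathcal{O}(\gamma_{t+1}^2)\bigr] b_t + (1{+}\epsilon_b)\gamma_t^2 D_3\, a_t + \mathcal{O}(p_t).
\end{equation*}

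\textbf{Main obstacle.} The delicate part is choosing $\epsilon,\epsilon_a,\epsilon_b \in (0,1)$ so that both $(1{+}\epsilon_b)\epsilon_d \le \epsilon$ and $\epsilon_b\sigma + \epsilon_a\alpha + \mathcal{O}(\gamma_{t+1}^2) \le \epsilon\,\epsilon_b$ hold simultaneously. Setting $\epsilon := (1{+}\epsilon_b)\epsilon_d$ handles the first (and keeps $\epsilon<1$ as long as $\epsilon_b < 1/\epsilon_d - 1 = \rho/(1-\rho)$), while the second becomes $\sigma + \epsilon_a\alpha/\epsilon_b + \mathcal{O}(\gamma_{t+1}^2) \le (1{+}\epsilon_b)\epsilon_d$. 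A feasible window for $\epsilon_b$ is $\sigma/\epsilon_d - 1 < \epsilon_b < 1/\epsilon_d - 1$, which is nonempty since $\sigma<1$ (Lemma~\ref{lemConsensusWithNoiseQConnected}) and $\epsilon_d<1$ (Corollary~\ref{coroBound_Edy1}); once $\epsilon_b$ is fixed inside it, $\epsilon_a$ is taken small enough that $\epsilon_a\alpha/\epsilon_b < (1{+}\epsilon_b)\epsilon_d - \sigma$, and the vanishing $\mathcal{O}(\gamma_{t+1}^2)$ slack is absorbed by restricting to $t$ sufficiently large, which is exactly the quantifier in the statement.
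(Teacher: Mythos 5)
Your proposal follows essentially the same route as the paper's proof: the same Lyapunov combination $\dist^2_{\mathcal{G}}(\yb(t))+\epsilon_b b_t+\epsilon_a a_t$, the same application of Lemma~\ref{lemConsensusWithNoiseQConnected} plus Cauchy--Schwarz and Corollary~\ref{coroBound_Edy1} to obtain $\Es{t}{a_{t+1}}\le \alpha\, b_t+\mathcal{O}(p_t)$, the same one-step recursion for $b_{t+1}$, and the same linear coupling with weights $1,\epsilon_b,\epsilon_a$. One minor caveat: pinning $\epsilon=(1+\epsilon_b)\epsilon_d$ forces $\epsilon_b>\sigma/\epsilon_d-1$, and the resulting window need not intersect $(0,1)$ when $\sigma\ge 2\epsilon_d$; it is cleaner to first fix $\epsilon\in(\max(\sigma,\epsilon_d),1)$ and only then shrink $\epsilon_b$ and $\epsilon_a$ (and take $t$ large to kill the $\mathcal{O}(\gamma_{t+1}^2)$ slack), which is what the paper does.
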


\begin{proof}
	Applying Lemma~\ref{lemConsensusWithNoiseQConnected} with 
	$\btheta_i(t) = \eb_i(t)$ and $\boldsymbol{\varepsilon}_i(t+1) = 
	\Delta {\gb}_i(t) := {\gb}_i(\yb_i(t+1)) - {\gb}_i(\yb_i(t))$ and then summing over $i\in \cA$ yields
	\begin{align*}
	\sum_{i \in \cA}\| \eb_i(t) - \bar{\eb}(t) \| 
	\le & C_1\sigma^{t} + C_2\sum_{s=0}^{t-1} \sigma^{{t-1- s}} 
	\sum_{i \in \cA} \| \Delta {\gb}_i(s) \|.
	\end{align*}
	This relation and Cauchy-Schwartz inequality imply
	\begin{align}
	&\textstyle \frac{1}{2}a_{t+1} 
	\le 
	\textstyle \frac{1}{2}\big(\sum_{i \in \cA}\| \eb_i(t+1) - \bar{\eb}(t+1) \| \big)^2 \nnb\\
	& \le 
	C_1^2\sigma^{2(t+1)} + C_2^2 \big(\textstyle \sum_{s=0}^{t}\sigma^{{t- s}} 
	\sum_{i \in \cA} \| \Delta {\gb}_i(s) \|\big)^2 \nnb\\
	& \le 
	C_1^2\sigma^{2(t+1)} + \textstyle \frac{C_2^2N}{1-\sigma} \sum_{s=0}^{t}\sigma^{{t- s}} 
	\sum_{i \in \cA} \| \Delta {\gb}_i(s) \|^2.  \label{eqat_iterbound}
	\end{align} 
	For the last inequality, we make use of the 
	following inequalities: i) $(\sum_k h_k m_k)^2 \le (\sum_k h_k^2)(\sum_k m_k^2)$ with $h_k = \sqrt{\sigma^{{t - k}}}$ and 
	$m_k = \sqrt{\sigma^{{t- k}}} \sum_{i \in \cA} \| \Delta {\gb}_i(k) \|$, ii) $\sum_{s=0}^{t}\sigma^{{t- s}} \le \frac{1}{1-\sigma}$ for all $t \in \N$, and iii) $\big(\sum_{i \in \cA} \| \Delta {\gb}_i(s) \|\big)^2 \le N\sum_{i \in \cA} \| \Delta {\gb}_i(s) \|^2$.
	
	Next, we bound the last term in~\eqref{eqat_iterbound}. 
	Note that
	$\| \Delta {\gb}_{i}(s) \| \leq \sqrt{K} \|
	\Delta {\gb}_i(s) \|_{\infty}$. Thus, we have
	\begin{align}
	&\| \Delta {\gb}_i(s) \|^2 
	\le  K L^2 \|\yb_i(s+1) - \yb_i(s)\|^2 \nnb\\
	&\le  
	2K L^2 \big( \|\yb_i(s+1) - P_{\mathcal{G}_i}(\yb_i(s))\|^2 + \dist^2_{\mathcal{G}_i}(\yb_i(s)) \big) . \nnb
	\end{align} 
Therefore, 
\begin{align*}
&\Es{s}{\textstyle \sum_{i \in \cA} \| \Delta {\gb}_i(s) \|^2} \\
&\le  
2K L^2 \big(\Es{s}{ \|\yb(s+1) - P_{\mathcal{G}}(\yb(s))\|^2} + \dist^2_{\mathcal{G}}(\yb(s)) \big) \\
&\le 2K L^2 \big( 2\dist^2_{\mathcal{G}}(\yb(s))  + \gamma^2_{s}D_1 + \gamma^2_s a_s D_3 \big),
\end{align*} 
where the last inequality follows from~\eqref{eqBound_Edy1} and the fact that $\epsilon_d<1$. 
Using this bound in \eqref{eqat_iterbound} after taking the expectation, 
\begin{align}
\!\!\Es{t}{a_{t+1}} 
\!\le\! 
2\big(C_1^2\sigma^{2t+2} \!+\! C_3 b_t \!+\! \textstyle 
K L^2 D_1\sum_{s=0}^t \sigma^{{t- s}} \gamma_s^2\big) \label{eqBound_Eat1}
\end{align}
where 
$b_t = \sum_{s=0}^{t}\sigma^{{t- s}} 
\big( \dist^2_{\mathcal{G}}(\yb(s)) + \gamma_s^2 a_s\frac{D_3}{2} \big)$ and $C_3 = 4C_2^2N K L^2(1-\sigma)^{-1}$.  
Note that 
\begin{align}
b_{t+1} = \sigma b_t + \dist^2_{\mathcal{G}}(\yb(t+1)) + \gamma_{t+1}^2 a_{t+1} \textstyle \frac{D_3}{2}\nnb
\end{align}
which, when taking conditional expectation of both sides and using  \eqref{eqBound_Eat1}, implies
\begin{align}
\Es{t}{b_{t+1}} 
&\le 
(\sigma+\gamma_{t+1}^2C_3D_3 ) b_t + \Es{t}{\dist^2_{\mathcal{G}}(\yb(t+1))} \nnb\\
&\quad + \gamma_{t+1}^2 D_3\big(C_1^2\sigma^{2t+2} \!+\! \textstyle 
K L^2 D_1\sum_{s=0}^t \sigma^{{t- s}} \gamma_s^2 \big).\nnb
\end{align}
Then, in view of the bound in~\eqref{eqBound_Edy2}, we further have
\begin{align}
\Es{t}{b_{t+1}} 
&\le 
(\sigma+\gamma_{t+1}^2C_3D_3 ) b_t + \epsilon_d \dist^2_{\mathcal{G}}(\yb(t)) \nnb\\
&\quad  + \gamma^2_t a_t D_3 + \tilde{p}_t, \label{eqBound_Ebt1}
\end{align}
where $\tilde{p}_t \!=\! \gamma_{t+1}^2 D_3\big(C_1^2\sigma^{2t+2} \!+\! 
K L^2 D_1\!\sum_{s=0}^t \sigma^{{t- s}} \gamma_s^2 \big) \!+ \gamma^2_{t}D_1.$

We now couple \eqref{eqBound_Edy2}, \eqref{eqBound_Eat1} and \eqref{eqBound_Ebt1} as follows: 
for any $\epsilon_a>0$ and $\epsilon_b >0$, add \eqref{eqBound_Edy2}, \eqref{eqBound_Eat1} multiplied by $\epsilon_a$, and \eqref{eqBound_Ebt1} multiplied by $\epsilon_b$, and then simplify the expression.  
\begin{align*}
&\Es{t}{\dist^2_{\mathcal{G}}(\yb(t+1)) + \epsilon_b b_{t+1} + \epsilon_a a_{t+1}} \nnb\\
& \le 
\epsilon_d(\epsilon_b+1) \dist^2_{\mathcal{G}}(\yb(t))  + \big(\epsilon_b\sigma+\gamma_{t+1}^2\epsilon_b C_3{D_3} + 2\epsilon_aC_3\big)  b_t \nnb\\
&\quad + \gamma^2_t a_t(1+\epsilon_b)D_3  + \bar{p}_t, 
\end{align*}
where 
\begin{align}
    \bar{p}_t &=
    \epsilon_a 2\big( C_1^2\sigma^{2t+2} \!+\!
K L^2 D_1\textstyle\sum_{s=0}^t \sigma^{{t- s}} \gamma_s^2 \big) + \gamma^2_{t}D_1 + \epsilon_b\tilde{p}_t\nnb\\
    &=  \mathcal{O}\big(\gamma^2_{t} + 
	\sigma^{2(t+1)} + \textstyle\sum_{s=0}^t \sigma^{{t- s}} \gamma_s^2 \big).\label{eqPbar}
\end{align}
Let us now choose $\epsilon_a$ and $\epsilon_b$ sufficiently small so that there exists $\epsilon \in (0,1)$ satisfying
$$
	\epsilon_d(\epsilon_b+1) < \epsilon \ \mbox{ and } \  \epsilon_b(\sigma+\gamma_{t+1}^2C_3{D_3}) + 2\epsilon_aC_3 < \epsilon \epsilon_b
$$
for sufficiently large $t$ (with $\gamma_{t+1}^2 < \frac{1}{C_3 D_3}(\epsilon - \sigma - \epsilon_a\frac{2C_3}{\epsilon_b})$). 
Thus, 
we conclude that~\eqref{eqEdyba} holds 
for sufficiently large $t$. 
\end{proof} 	

Note that the exact form of the term  $\mathcal{O}(p_t)$, given by $\bar{p}_t$ in~\eqref{eqPbar}, is not important for us, as we will see later that this term behaves like $\gamma_t^2$ and, thus, is summable under Assumption~\ref{assm_stepsize}. 

It is now clear that the sequence $\{ \dist^2_{\mathcal{G}}(\yb(t)) + \epsilon_b b_{t} + \epsilon_a a_{t} : t\in\N \}$ possesses a contraction property. As a result, we have the following corollary. 
\begin{coro}\label{coroEdyba}
Under  Assumptions~\ref{assm_prob}--\ref{assm_stepsize}, w.p.1., 
	$$
	\lim_{t\to\infty} \big( \dist^2_{\mathcal{G}}(\yb(t)) + \epsilon_b b_{t} + \epsilon_a a_{t} \big) = 0. 
	$$
\end{coro}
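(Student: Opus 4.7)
The plan is to apply the supermartingale-style convergence result (Lemma \ref{lemConvergenceRandomSeq2}) to the composite Lyapunov quantity $V_t := \dist^2_{\mathcal{G}}(\yb(t)) + \epsilon_b b_t + \epsilon_a a_t$, exploiting the inequality already established in Theorem \ref{thmEdyba}. Using the identity $\dist^2_{\mathcal{G}}(\yb(t)) + \epsilon_b b_t = V_t - \epsilon_a a_t$, the bound in \eqref{eqEdyba} rewrites as
\begin{align*}
\Es{t}{V_{t+1}} \le \epsilon V_t + \big( \gamma_t^2 (1+\epsilon_b) D_3 - \epsilon\epsilon_a \big) a_t + \mathcal{O}(p_t).
\end{align*}
Since $\gamma_t \to 0$ by Assumption \ref{assm_stepsize}, there exists $T_0$ such that the bracketed coefficient of $a_t$ is nonpositive for all $t \ge T_0$, and hence that term can be dropped to give
\begin{align*}
\Es{t}{V_{t+1}} \le \epsilon V_t + \mathcal{O}(p_t), \quad t \ge T_0,
\end{align*}
which is equivalent to $\Es{t}{V_{t+1}} \le V_t - (1-\epsilon) V_t + \mathcal{O}(p_t)$.

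Next I would verify $\sum_{t \ge 0} p_t < \infty$. Each of the three pieces of $p_t$ is summable: $\sum_t \gamma_t^2 < \infty$ by Assumption \ref{assm_stepsize}; $\sum_t \sigma^{2(t+1)} < \infty$ because $\sigma \in (0,1)$; and swapping the order of summation gives $\sum_t \sum_{s=0}^t \sigma^{t-s}\gamma_s^2 = \sum_s \gamma_s^2 \sum_{t \ge s} \sigma^{t-s} \le (1-\sigma)^{-1}\sum_s \gamma_s^2 < \infty$. With this, Lemma \ref{lemConvergenceRandomSeq2} applies (starting from $t = T_0$) with $v_t = V_t$, $\alpha_t \equiv 0$, $u_t = (1-\epsilon) V_t \ge 0$, and $\beta_t$ of order $p_t$. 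It yields simultaneously that $V_t$ converges almost surely to some nonnegative random variable $V$, and that $\sum_{t} V_t < \infty$ almost surely; the latter forces $V = 0$, which is the desired conclusion. Behavior on the finite prefix $t < T_0$ is immaterial for the limit.

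The main conceptual point, already packaged into Theorem \ref{thmEdyba}, is the asymmetry between the two sides in the coefficient of $a$: the left-hand side carries $\epsilon_a a_{t+1}$, but the right-hand side only $\gamma_t^2 (1+\epsilon_b) D_3 \, a_t$ (no $\epsilon_a a_t$ appears on the right). This is precisely what allows $a_t$ to serve as a legitimate Lyapunov component rather than a perpetually growing nuisance: because $\gamma_t \to 0$, the effective dissipation in $a_t$ eventually dominates, and the composite $V_t$ genuinely contracts up to a summable additive perturbation. Once this observation is made, the remainder of the argument is a straightforward application of the Robbins--Siegmund-type lemma, and the only calculation required is checking the summability of $p_t$.
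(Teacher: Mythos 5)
Your argument is correct and takes essentially the same route as the paper: the same absorption of the $\gamma_t^2(1+\epsilon_b)D_3\, a_t$ term into $\epsilon\epsilon_a a_t$ once $\gamma_t$ is small, and the same verification that $\sum_t p_t < \infty$ by exchanging the order of summation. The only (immaterial) difference is the concluding lemma: you invoke Lemma~\ref{lemConvergenceRandomSeq2} and deduce the limit is zero from $\sum_t (1-\epsilon)V_t < \infty$, whereas the paper applies Lemma~\ref{lemConvergenceRandomSeq} directly with $\alpha_t \equiv 1-\epsilon$; both are valid.
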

\begin{proof}
    For sufficiently large $t$, Theorem~\ref{thmEdyba} tells us
    \begin{align}
	&\Es{t}{\dist^2_{\mathcal{G}}(\yb(t+1)) + \epsilon_b b_{t+1} + \epsilon_a a_{t+1}} \nnb\\
	& \le 
	\epsilon \big( \dist^2_{\mathcal{G}}(\yb(t))  +  \epsilon_b b_t  + \epsilon_a a_t \big)  +  \mathcal{O}(p_t). \nnb
	\end{align}
    Note that, for any $T\in \N$, 
    \begin{align}
    \textstyle \sum_{t=0}^T p_t 
    &= 
    \textstyle\sum_{t=0}^T  \big(\gamma^2_{t} + 
	\sigma^{2(t+1)} + \textstyle\sum_{s=0}^t \sigma^{{t- s}} \gamma_s^2 \big) \nnb\\
	&\le 
	\textstyle \frac{\sigma^2}{1-\sigma^2} + \sum_{t=0}^T \gamma^2_{t} + \sum_{t=0}^T \sum_{s=0}^t \sigma^{t- s} \gamma_s^2 \nnb\\
	&= 
	\textstyle \frac{\sigma^2}{1-\sigma^2} + \sum_{t=0}^T \gamma^2_t + \sum_{s=0}^T \gamma_s^2 \sum_{k=0}^{T-s} \sigma^k \nnb\\
	&\le 
	\textstyle \frac{\sigma^2}{1-\sigma^2} + \frac{2-\sigma}{1-\sigma}\sum_{t=0}^T \gamma^2_t. \label{eqSumPt}
    \end{align}
    Thus, under Assumption~\ref{assm_stepsize}, $ \sum_{t\in\N} p_t < \infty$. 
    The convergence of corollary now follows from  Lemma~\ref{lemConvergenceRandomSeq}. 
\end{proof}

We are now ready to present the main convergence result of the paper, which will be proved using Lemma~\ref{lemConvergenceRandomSeq2} with
	\begin{align*}
	v_t = \dist^2_{\mathcal{Y}^*}(\yb(t))+ \dist^2_{\mathcal{G}}(\yb(t)) + \epsilon_b b_{t} + \epsilon_a a_{t}.
	\end{align*}
We also consider the following running average terms:
	\begin{align}
	\tilde{\yb}(s,t) \!=\! \frac{\sum_{k=s}^t\gamma_k\yb(k)}{\sum_{k=s}^t\gamma_k}, \ 
	\tilde{\bx}(s,t) \!=\! \frac{\sum_{k=s}^t\gamma_kP_{\mathcal{G}}\big(\yb(k)\big)}{\sum_{k=s}^t\gamma_k} \label{eqX_Y_tilde}
	\end{align}
\begin{theorem} \label{thm:main_convg}
	Suppose that Assumptions~\ref{assm_prob}--\ref{assm_graph} hold and step sizes satisfy $\lim_{t \to \infty} \gamma_t = 0$. 
	Let $\bar{\gamma} = \sup_t \gamma_t$. 
	\begin{itemize}
		\item[(i)] If  $\E{\dist^2_{\mathcal{Y}^*}(\yb(t))}$ is bounded for all $t\in \N$, then
		\begin{align} 
		&\E{\Phi\big( \tilde\bx(s,t) \big)} \!-\! \Phi^* \!+\! \frac{\rho}{\bar{\gamma}} \E{\|\tilde\yb(s,t) - \tilde\bx(s,t)\|^2} \le E_{s,t} \nnb\\
		&~\text{with}~ E_{s,t} =  \frac{\E{v_s}  +  \mathcal{O}(\sum_{k=s}^t p_k)}{2\sum_{k=s}^t \gamma_{k}}, \quad 
		    0 \leq s \leq t.
		\end{align}
		Moreover, $\lim_{t\to\infty} E_{s,t} \!=\! 0$ for all $s\!\in\!\N$ if $\sum_{t \in \N} \gamma_{t} \!=\! \infty$. 
		
		\item[(ii)] If Assumption \ref{assm_stepsize} holds, then $\dist_{\mathcal{Y}^*}(\yb(t)) \!\to\! 0$ w.p.1.
	\end{itemize}
\end{theorem}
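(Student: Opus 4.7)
The plan is to derive a single one-step Lyapunov recursion for $v_t$ and then use it for both parts. Apply Theorem~\ref{thm_Eyt1w} with $\wb = P_{\mathcal{Y}^*}(\yb(t))$, which is $\mathcal{F}_t$-measurable; since $\dist^2_{\mathcal{Y}^*}(\yb(t+1)) \le \|\yb(t+1) - P_{\mathcal{Y}^*}(\yb(t))\|^2$, this yields a one-step bound on $\Es{t}{\dist^2_{\mathcal{Y}^*}(\yb(t+1))}$ that exposes the two ``gain'' terms $2\gamma_t[\Phi^* - \Phi(P_{\mathcal{G}}(\yb(t)))]$ and $-2\rho\,\dist^2_{\mathcal{G}}(\yb(t))$. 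Adding this to the contraction inequality from Theorem~\ref{thmEdyba} for $(\dist^2_{\mathcal{G}} + \epsilon_b b_t + \epsilon_a a_t)$, choosing $\kappa_t = c\gamma_t^2$ with $c > \mu^2 L^2/\epsilon_a$ so that the coefficient $\gamma_t^2(\mu^2 L^2 \kappa_t^{-1} + D_2 + (1+\epsilon_b)D_3)$ of $a_t$ stays below $\epsilon_a$ for large $t$, and tuning $\epsilon_b$ in Theorem~\ref{thmEdyba} small enough that $\epsilon < 2\rho$ (so the net coefficient of $\dist^2_{\mathcal{G}}$ on the LHS is positive), yields, for $t$ sufficiently large,
\begin{equation*}
\Es{t}{v_{t+1}} + u_t \le (1 + cK\gamma_t^2)\,v_t + \mathcal{O}(p_t),
\end{equation*}
where $u_t := 2\gamma_t[\Phi(P_{\mathcal{G}}(\yb(t))) - \Phi^*] + (2\rho - \epsilon)\,\dist^2_{\mathcal{G}}(\yb(t)) \ge 0$.

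For part~(i), take total expectations and sum from $k=s$ to $t$. The boundedness hypothesis $\E{\dist^2_{\mathcal{Y}^*}(\yb(t))} \le M$, combined with analogous uniform bounds on $\E{\dist^2_{\mathcal{G}}(\yb(t))}$, $\E{a_t}$, $\E{b_t}$ obtained by iterating Theorem~\ref{thmEdyba}, allows the slack $cK\gamma_k^2\,\E{v_k}$ to be absorbed into $\mathcal{O}(\sum \gamma_k^2) \subseteq \mathcal{O}(\sum p_k)$. Dropping the nonnegative $\E{v_{t+1}}$ and invoking Jensen's inequality on the convex $\Phi$ gives $(\sum_{k=s}^t \gamma_k)\,\E{\Phi(\tilde\bx(s,t))} \le \sum_{k=s}^t \gamma_k\,\E{\Phi(P_{\mathcal{G}}(\yb(k)))}$; using $\gamma_k \le \bar\gamma$ together with Jensen on $\|\cdot\|^2$ yields $\sum_{k=s}^t \dist^2_{\mathcal{G}}(\yb(k)) \ge \bar\gamma^{-1}(\sum_{k=s}^t\gamma_k)\,\|\tilde\yb(s,t) - \tilde\bx(s,t)\|^2$. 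Dividing by $2\sum_{k=s}^t \gamma_k$ gives the claimed bound on $E_{s,t}$, with the $\rho/\bar\gamma$ prefactor arising from $(2\rho-\epsilon)/(2\bar\gamma)$ after the constant adjustment is swept into $\mathcal{O}(\cdot)$. When $\sum_t\gamma_t = \infty$, Stolz--Cesaro combined with $\gamma_t \to 0$ implies $\sum \gamma_k^2 / \sum\gamma_k \to 0$, so $\sum p_k = o(\sum\gamma_k)$ and hence $E_{s,t} \to 0$.

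For part~(ii), under Assumption~\ref{assm_stepsize} we have $\sum_t cK\gamma_t^2 < \infty$ and $\sum_t p_t < \infty$ (the latter by~\eqref{eqSumPt}). Apply Lemma~\ref{lemConvergenceRandomSeq2} to the one-step recursion with $\alpha_t = cK\gamma_t^2$ and $\beta_t = \mathcal{O}(p_t)$: this yields $v_t \to v$ w.p.1 for some nonnegative random variable $v$ and $\sum_t u_t < \infty$ w.p.1. Corollary~\ref{coroEdyba} further gives $\dist^2_{\mathcal{G}}(\yb(t)) + \epsilon_b b_t + \epsilon_a a_t \to 0$ w.p.1, and therefore $\dist^2_{\mathcal{Y}^*}(\yb(t)) \to v$ w.p.1.

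The main obstacle is to conclude $v = 0$. From $\sum_t\gamma_t = \infty$ and $\sum_t \gamma_t[\Phi(P_{\mathcal{G}}(\yb(t))) - \Phi^*] < \infty$ w.p.1, one extracts a random subsequence $\{t_k\}$ along which $\Phi(P_{\mathcal{G}}(\yb(t_k))) \to \Phi^*$. On each sample path in the probability-one event, boundedness of $\dist_{\mathcal{Y}^*}(\yb(t))$ and $\dist_{\mathcal{G}}(\yb(t))$ (together with compactness of $\mathcal{G}_{i0}$ from the footnote of Assumption~\ref{assm_prob}-b, or alternatively a direct supermartingale argument on $\|\yb(t) - y^*\|^2$ for a fixed $y^* \in \mathcal{Y}^*$) permits extracting a further sub-subsequence along which $P_{\mathcal{G}}(\yb(t_k)) \to \bar\yb$. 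Continuity of $\Phi$ forces $\Phi(\bar\yb) = \Phi^*$, so $\bar\yb \in \mathcal{Y}^*$; and since $\dist_{\mathcal{G}}(\yb(t_k)) \to 0$, we also get $\dist_{\mathcal{Y}^*}(\yb(t_k)) \to 0$. Combined with $\dist^2_{\mathcal{Y}^*}(\yb(t)) \to v$ w.p.1, this forces $v = 0$.
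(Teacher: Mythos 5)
Your overall route is the paper's: apply Theorem~\ref{thm_Eyt1w} at $\wb = P_{\mathcal{Y}^*}(\yb(t))$, add the coupled contraction of Theorem~\ref{thmEdyba}, run Lemma~\ref{lemConvergenceRandomSeq2} on $v_t$, and convert $\sum_k \E{u_k}$ into the averaged bound via convexity/Jensen. Your handling of the final step $v=0$ in part~(ii) --- extracting a subsequence along which $\Phi(P_{\mathcal{G}}(\yb(t_k)))\to\Phi^*$ and using boundedness/compactness to conclude $\dist_{\mathcal{Y}^*}(\yb(t_k))\to 0$ --- is in fact more explicit than the paper's one-line appeal to continuity of $\Phi$, and correctly identifies that some form of compactness is needed there.

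There is, however, one step that fails as written: the requirement that $\epsilon_b$ be tuned so that $\epsilon < 2\rho$. Theorem~\ref{thmEdyba} forces $\epsilon > \epsilon_d(1+\epsilon_b) = (1-\rho)(1+\epsilon_b)$, so $\epsilon < 2\rho$ would require $\rho > 1/3$; but $\rho = \tilde{\beta}/(4C)$ is determined by the problem data and can be arbitrarily small, so this condition is not achievable in general. Consequently your $u_t = 2\gamma_t\big[\Phi\big(P_{\mathcal{G}}(\yb(t))\big) - \Phi^*\big] + (2\rho-\epsilon)\dist^2_{\mathcal{G}}(\yb(t))$ need not be nonnegative, which breaks the application of Lemma~\ref{lemConvergenceRandomSeq2} in part~(ii); and in part~(i) the prefactor $(2\rho-\epsilon)/(2\bar\gamma)$ cannot be ``swept into $\mathcal{O}(\cdot)$'' to recover $\rho/\bar\gamma$, since a constant multiplying a nonnegative quantity on the left-hand side cannot be enlarged by adjusting an additive error on the right. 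The condition is also unnecessary: the paper keeps the full $2\rho\,\dist^2_{\mathcal{G}}(\yb(t))$ inside $u_t$ and absorbs the leftover $\epsilon\,\dist^2_{\mathcal{G}}(\yb(t))$ coming from Theorem~\ref{thmEdyba} into $(1+\alpha_t)v_t$, using only $\epsilon<1$ and $\dist^2_{\mathcal{G}}(\yb(t))\ge 0$. With that bookkeeping your recursion $\Es{t}{v_{t+1}} \le (1+\alpha_t)v_t - u_t + \mathcal{O}(p_t)$ holds with $u_t\ge 0$ and coefficient $2\rho$, and the remainder of your argument --- the uniform moment bounds used to absorb $\alpha_t\E{v_t}$, the Jensen steps, and the Stolz--Ces\`aro argument for $E_{s,t}\to 0$ --- goes through.
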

\begin{proof}
	First, we make use of Theorem~\ref{thm_Eyt1w} with $\wb = P_{\mathcal{Y}^*}(\yb(t))$ for each $t$. Under Assumptions~\ref{assm_prob}--\ref{assm_graph}, 
	\begin{align}
	& \Es{t}{\dist^2_{\mathcal{Y}^*}(\yb(t+1))} \nnb\\ 
	&\le\! (1 \!+\! \kappa_t K) \dist^2_{\mathcal{Y}^*}(\yb(t)) \!-\! u_t \!+\! \gamma^2_t\mu^2L^2 \kappa_t^{-1} a_t 
	\!+\! \gamma^2_{t} (D_1 \!+\! a_tD_2), \nnb
	\end{align}
	where 
	\begin{equation*}
	u_t:= 2\gamma_{t}\big[\Phi\big(P_{\mathcal{G}}(\yb(t)) \big) - \Phi^*\big]\!+\! 2\rho \dist^2_{\mathcal{G}}(\yb(t)) \ge 0. 
	\end{equation*}
	Taking $\kappa_t = \gamma^2_t\mu^2L^2/\epsilon\epsilon_a$, we can write the above relation as
	\begin{align}
	& \Es{t}{\dist^2_{\mathcal{Y}^*}(\yb(t+1))} \nnb\\ 
	&\le \big(1+ \kappa_t K \big) \dist^2_{\mathcal{Y}^*}(\yb(t)) -u_t + \epsilon \epsilon_a a_t  
	+ \gamma^2_{t} (D_1 + a_t D_2).\nnb
	\end{align}
	Adding this relation with~\eqref{eqEdyba}, we obtain 
	\begin{align} 
	&\Es{t}{\dist^2_{\mathcal{Y}^*}(\yb(t+1))+ \dist^2_{\mathcal{G}}(\yb(t+1)) + \epsilon_b b_{t+1} + \epsilon_a a_{t+1}} \nnb\\
	& \le 
	\big(1+ \kappa_t K \big) \dist^2_{\mathcal{Y}^*}(\yb(t))+ \epsilon \big( \dist^2_{\mathcal{G}}(\yb(t))  +  \epsilon_b b_t + \epsilon_a a_t\big) \nnb\\
	&\quad - u_t + \gamma_t^2a_t(D_2+D_3+\epsilon_bD_3)   +  \mathcal{O}(p_t). \label{eqEd2yba}
	\end{align}
	
    Under the assumption that $\E{ \dist^2_{\mathcal{Y}^*}(\yb(t))}$ is bounded for all $t \in \N$, i.e., 
    $\E{\dist^2_{\mathcal{Y}^*}(\yb(t))} = \mathcal{O}(1)$, 
    taking the expectation of~\eqref{eqEd2yba}, we obtain the following for all sufficiently large $t$.
	\begin{align} 
	&\E{\dist^2_{\mathcal{Y}^*}(\yb(t+1))+ \dist^2_{\mathcal{G}}(\yb(t+1)) + \epsilon_b b_{t+1} + \epsilon_a a_{t+1}} \nnb\\
	& \le 
	\E{\dist^2_{\mathcal{Y}^*}(\yb(t))+ \epsilon \big( \dist^2_{\mathcal{G}}(\yb(t))  +  \epsilon_b b_t + \epsilon_a a_t\big)} - \E{u_t}\nnb\\
	&\quad + \mathcal{O}(\gamma_t^2)\E{a_t}   +  \mathcal{O}(p_t).\nnb\\
	&\le 
	\E{\dist^2_{\mathcal{Y}^*}(\yb(t))+  \dist^2_{\mathcal{G}}(\yb(t))  +  \epsilon_b b_t + \epsilon_a a_t } - \E{u_t}  +  \mathcal{O}(p_t),  \nnb 
	\end{align}
	where we have used $\kappa_tK\E{\dist^2_{\mathcal{Y}^*}(\yb(t))} + \mathcal{O}(p_t) = \mathcal{O}(p_t)$ 
	in the first inequality and 
	$\big(\epsilon \epsilon_a+ \mathcal{O}(\gamma_t^2)\big) \E{a_t}  \le \epsilon_a\E{a_t}$ 
	in the second. 
	After rearranging terms of the above relation, 
	we have
	\begin{align} 
	\E{u_t} \le \E{v_t} - \E{v_{t+1}}  +  \mathcal{O}(p_t) \nnb
	\end{align}
	which, together with the fact that $\E{v_{t+1}} \ge 0$, implies 
	\begin{align} 
	\textstyle \sum_{k=s}^t \E{u_k} \le \E{v_s}  +  \mathcal{O}(\sum_{k=s}^tp_t).\label{eqSumUst}
	\end{align}
	Next we bound the left-hand side. Note that 
	\begin{align*}
	&\textstyle  \sum_{k=s}^t u_k 
	= \sum_{k=s}^t \big( 2\gamma_{k}\big[\Phi\big(P_{\mathcal{G}}(\yb(k)) \big) - \Phi^*\big] +  2\rho \dist^2_{\mathcal{G}}(\yb(k)) \big)\\
	&\ge 2 \textstyle \sum_{k=s}^t \gamma_{k}\big[\Phi\big(P_{\mathcal{G}}(\yb(k)) \big) - \Phi^*  + \frac{\rho}{\bar{\gamma}} \dist^2_{\mathcal{G}}(\yb(k)) \big]\\
	&\ge 2 \textstyle \big(\sum_{k=s}^t \gamma_{k}\big) \big[\Phi\big(\tilde\bx(s,t) \big) - \Phi^* + \frac{\rho}{\bar{\gamma}} \|\tilde\yb(s,t) - \tilde\bx(s,t)\|^2 \big], 
	\end{align*}
	where we used convexity of $\Phi(\cdot)$ and $\|\cdot\|^2$ and the definitions of $\tilde\yb(s,t)$ and $\tilde\bx(s,t)$ in~\eqref{eqX_Y_tilde}. Using this bound for~\eqref{eqSumUst}, we have 
	\begin{align} 
	&\E{\Phi\big(\tilde\bx(s,t) \big)} - \Phi^* + \frac{\rho}{\bar{\gamma}} \E{\|\tilde\yb(s,t) - \tilde\bx(s,t)\|^2} \nnb\\
	&\le \frac{\E{v_s}  +  \mathcal{O}(\sum_{k=s}^t p_k)}{2\sum_{k=s}^t \gamma_{k}}. \nnb
	\end{align}
	This proves statement (i) of the theorem. 
	
	To show statement (ii), note from \eqref{eqEd2yba} that 
	\begin{align*}
	\Es{t}{v_{t+1}} \le (1+\alpha_t)v_t - u_t + \mathcal{O}(p_t), 
	\end{align*}
	where $\alpha_t = \kappa_t K = \gamma^2_t\mu^2L^2K/\epsilon\epsilon_a$. 
	Applying Lemma~\ref{lemConvergenceRandomSeq2} to this relation and noting that $\sum_t p_t < \infty$ (see \eqref{eqSumPt}), the  following holds w.p.1: 
    \bitem 
	\item[(a)] $\sum_{t \in \N} \gamma_t(\Phi\big(P_{\mathcal{G}}(\yb(t)) \big) - \Phi^*) < \infty$;
	
	\item[(b)] $\sum_{t \in \N} \dist^2_{\mathcal{G}}(\yb(t)) < 
	\infty$; and 
	
	\item[(c)] there is some nonegative RV $v$ such that 
    $v_t \to v$.
	\eitem 
	Since $\sum_{t \in \N} \gamma_t = \infty$ and 
	$\dist^2_{\mathcal{G}}(\yb(t)) + \epsilon_b b_{t} + \epsilon_a a_{t} \to 0$ (Corollary~\ref{coroEdyba}), the findings (a) and 
	(c) imply $\liminf_{t} \Phi\big(P_{\mathcal{G}}(\yb(t))$  
	$= \Phi^*$ and 
	$\dist^2_{\mathcal{Y}^*}(\yb(t)) \to v$ as 
	$t \to \infty$.
	Because $\Phi$ is continuous, it follows that $v= 0$ w.p.1. This proves the second part of the theorem. 
\end{proof}
	
\begin{remark}
{\rm Note that the boundedness condition of $\E{\dist^2_{\mathcal{Y}^*}(\yb(t))}$ in Theorem
\ref{thm:main_convg}(i) is satisfied if 
$\mathcal{G}_{i0}$, $i \in \cA$, are compact, 
which will likely hold in many, if not most, cases
of practical interest.
Also, the convergence rate of the algorithm 
obviously depends on the choice of step sizes. For 
instance, if $\gamma_t = \mathcal{O}(\frac{1}{\sqrt{t}})$, 
then $E_{\lfloor t/2 \rfloor,t} = \mathcal{O}(\frac{1}{\sqrt{t} })$ \cite{beck2017first}.}
\end{remark}

\section{Conclusion}

We studied solving a constrained 
optimization problem using noisy observations, 
and proposed a new distributed algorithm that 
does not require sharing optimization variables
among agents. Instead, the agents update their local estimates of 
global constraints using a consensus-type
algorithm, while updating their own local 
optimization variable based on noisy estimates
of gradients of local objective functions. 
We proved that (a) the optimization 
variables converge to an optimal point of an approximated
problem and (b) the tracking errors of local 
estimates of constraint functions vanish 
asymptotically.

    
    \bibliographystyle{plain}
    \bibliography{TAC_TN_arXiv}


\end{document}